\footnotesize\setstretch{0.75},
\newtheorem{thm}{Theorem}[section]
\newtheorem{lem}[thm]{Lemma}
\newtheorem{cor}[thm]{Corollary}
\newtheorem{prop}[thm]{Proposition}
\theoremstyle{definition}
\newtheorem{definition}[thm]{Definition}
\newtheorem{example}[thm]{Example}
\newtheorem{remark}[thm]{Remark}
\title{Homomesy on permutations with toggling actions}
\author{Will Dowling$^1$ and Nadia Lafreni\`ere$^2$}
\thanks{${}^1$Dartmouth College. For correspondence: \href{mailto:wcdowling56@gmail.com}{wcdowling56@gmail.com}. This research has been supported by the Mark C. Hansen Undergraduate Research, Scholarship, and Creativity Fund, and the Paul K. Richter and Evalyn E. Cook Richter Memorial Fund at Dartmouth College.\\
${}^2$ Concordia University. For correspondence: \href{mailto:nadia.lafreniere@concordia.ca}{nadia.lafreniere@concordia.ca}.}
\begin{document}

\maketitle

\begin{abstract}
    Homomesy is an invariance phenomenon in dynamical algebraic combinatorics which occurs when the average value of some statistic on a set of combinatorial objects is the same over each orbit generated by a map on these objects. In this paper we perform a systematic search for statistics homomesic for the set of permutations under the rotation map, identifying and proving 34 instances of homomesy. We show that these homomesies actually hold not only for rotation but in fact for a whole class of maps related to rotation by the notion of toggling, which is identified initially with composition of simple transpositions. In this way these maps are related to the rowmotion action defined on various combinatorial structures, which has a useful definition in terms of toggling. We prove some initial results on maps given by restricted or modified toggles. We discuss also the computational method used to identify candidate statistics from FindStat, a combinatorial statistics database.\\
    
    \textbf{Keywords:} homomesy, permutations, permutation rotation, dynamical algebraic combinatorics, toggling, rowmotion, permutation statistics, FindStat \\
    \textbf{MSC 2020:} 05A05, 05E18
\end{abstract}



\section{Introduction and Methods}

Dynamical algebraic combinatorics studies how sets of combinatorial objects behave under group actions. Of particular interest is the structure of the orbits generated by such an action on such a set.

In this paper we discuss homomesy \cite{PR2015}, which occurs when some statistic on a set of combinatorial objects has the same average value over each orbit generated by an action on these objects. In particular, following the approach of \cite{ELMSW22}, we perform a systematic search for statistics homomesic for the set $S_n$ of permutations of $[n]=\{1,\dots,n\}$ under the rotation map. In Section \ref{sec2} we prove 34 instances of homomesy for permutations with rotation. In Section \ref{sec3} we give an equivalent definition of the rotation map in terms of local swaps called `toggles' and show that the homomesies of Section \ref{sec2} hold for the whole class of maps generated by these toggles. These maps are equivalent to multiplication on the right by $n$-cycles, the Coxeter elements of the symmetric group, which motivates generalization of fixed point homomesy to a result on general reflection groups. Considering rotation as toggling is interesting also because it is not uncommon for actions exhibiting homomesy to correspond to compositions of toggles \cite[Section 3.2]{Roby2016}. The toggling conception of rotation represents its close connection with rowmotion, a well-studied action on combinatorial structures such as posets and tableaux that can also be represented in terms of toggling. We conclude in Section \ref{sec4} by exploring three maps on permutations generated by restriction or modification of the toggles of Section \ref{sec3}, proving 12 more instances of homomesy.

\subsection{Permutations} Permutations are a common object of study in combinatorics and more widely in mathematics. A \textbf{permutation} $\pi$ is a bijection from a set to itself. It is convenient to restrict our study to permutations of the set $[n]=\{1,\dots,n\}$; using the normal total order on the natural numbers, we see that each of these is a unique ordering of the first $n$ natural numbers. The set of $n!$ permutations of the elements of $[n]$ is denoted by $S_n$, which along with composition comprises the symmetric group. The $i$th entry of a permutation $\pi$ is $\pi(i)$, the $i$th element in the ordering; we say equivalently that $\pi(i)$ occupies position $i$ in $\pi$. We use one-line notation, writing $\pi$ out as $\pi(1)\pi(2)\cdots\pi(n)$.

Combinatorialists often study statistics on permutations. A \textbf{permutation statistic} is a map $st:S_n \rightarrow \mathbb{Z}$ such that $st(\pi)$ evaluates some meaningful characteristic of $\pi$. Some classic permutation statistics are the number of descents, the major index, the number of excedances, and the number of inversions. An example of a map that would not be a good statistic is one that sends every permutation to the value $1$. It is not clear that this really tells us anything about a permutation.

Further, given a bijective map $\varphi: S_n \rightarrow S_n$, we say that $\varphi$ partitions $S_n$ into \textbf{orbits}, where the orbit of $\pi$, denoted $\mathcal{O}_\pi$, is the set $\{\sigma = \varphi^m(\pi)\}$. These are the permutations obtained by any number of applications of $\varphi$ to $\pi$. Thus, if $\pi$ and $\sigma$ are in distinct orbits, we can never reach $\sigma$ from $\pi$ by means of $\varphi$.
\subsection{Homomesy}
Let the triple $(X, \varphi, st)$ consist of a finite ground set $X$, a bijective map $\varphi$ from $X$ to itself, and a statistic $st$ on the elements of $X$. Then we say that the triple exhibits \textbf{homomesy} if the average value of $st$ is the same over the elements of each orbit generated by $\varphi$ on $X$. Explicitly, the triple is homomesic when
\[
\frac{1}{\#\mathcal{O}}\sum_{x\in\mathcal{O}} st(x) = c
\]
for each orbit $\mathcal{O}$ and some constant $c$. In this case we say that $(X, \varphi, st)$ is $c$-mesic. Since $X$ is for our purposes always $S_n$, and the relevant map is always clear from context, we may say unambiguously that the statistic $st$ is $c$-mesic, say, for permutations with rotation. 

\begin{remark} \label{global}
If $st$ is $c$-mesic for permutations with rotation, the global average of $st$ over $S_n$ is also $c$.
\end{remark}

This fact can be useful in finding the orbit average of a statistic we know to be homomesic. For instance, it is well-known about the distribution of fixed points over $S_n$ that permutations have 1 fixed point on average. If we determined that the fixed point statistic was homomesic but did not know the orbit average, we could without further ado conclude that it is $1$. We get the somewhat unintuitive consequence that \textit{any} map $mp$ such that $(S_n, mp, fp)$ is homomesic must more specifically be 1-mesic; see e.g. \cite{LaCroixRoby}.\\

One more helpful tool is the following well-known result:

\begin{lem}{(Linear Combinations)}\label{lin_com}
For a given action, linear combinations of homomesic statistics are also homomesic.
\end{lem}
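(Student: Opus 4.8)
The plan is straightforward: this is a linearity statement about averaging, so I would unwind the definition of homomesy and use the fact that summation distributes over linear combinations. The key observation is that the orbit-average operation is itself linear in the statistic.

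First I would set up notation. Suppose $st_1, st_2, \dots, st_k$ are statistics on $X$ that are homomesic under a fixed action $\varphi$, say with $st_i$ being $c_i$-mesic. Let $a_1, \dots, a_k$ be any scalars (in $\mathbb{Z}$, or in $\mathbb{Q}$ if we allow rational averages), and form the linear combination $st = \sum_{i=1}^k a_i \, st_i$. I want to show $st$ is homomesic, and in fact compute its constant explicitly.

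The main step is a direct computation. Fix an arbitrary orbit $\mathcal{O}$ under $\varphi$. Then the orbit average of $st$ is
\[
\frac{1}{\#\mathcal{O}}\sum_{x\in\mathcal{O}} st(x) = \frac{1}{\#\mathcal{O}}\sum_{x\in\mathcal{O}}\sum_{i=1}^k a_i\, st_i(x) = \sum_{i=1}^k a_i\left(\frac{1}{\#\mathcal{O}}\sum_{x\in\mathcal{O}} st_i(x)\right) = \sum_{i=1}^k a_i c_i,
\]
where the second equality interchanges the two finite sums and pulls out the constants $a_i$, and the third equality applies the hypothesis that each $st_i$ is $c_i$-mesic, so its orbit average equals $c_i$ independent of which orbit we chose. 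The resulting value $\sum_i a_i c_i$ does not depend on $\mathcal{O}$, which is exactly the statement that $st$ is homomesic, with constant $\sum_i a_i c_i$.

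Honestly there is no real obstacle here; the only thing to be careful about is that the interchange of sums is legitimate, which it trivially is since both the orbit and the index set are finite. I would state the proof for two statistics (or a general finite family) and note that the constant of the combination is the same linear combination of the individual constants. This also quietly records the useful refinement that one can compute the new mesic value directly from the old ones.
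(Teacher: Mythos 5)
Your proof is correct and is exactly the standard argument: the paper itself does not write out a proof but defers to \cite[Section 3.2]{ELMSW22}, and that proof is the same linearity-of-orbit-averages computation you give. Your explicit constant $\sum_i a_i c_i$ also matches the paper's remark that $af+bg$ has orbit average $ac+bd$.
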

A proof of this lemma is given in \cite[Section 3.2]{ELMSW22}. In particular, if $f$ and $g$ are homomesic statistics for a given action with orbit averages $c$ and $d$, $af+bg$ is homomesic for the same action with orbit average $ac+bd$ for any $a,b\in \mathbb{Q}$. \\

Homomesy was introduced formally by Jim Propp and Tom Roby in 2015 \cite{PR2015}, and has generated a significant literature since then. It is particularly interesting to examine naturally arising sets and maps, such as the following.
\subsection{Rotation Map}
Here we define the \textbf{rotation} map on $S_n$ and its notation.
\begin{definition} \label{map_def}
The rotation map, $rot:S_n \rightarrow S_n$, sends 
\[\pi=\pi(1)\pi(2)\dots\pi(n) \mapsto rot(\pi)=\pi(2)\dots\pi(n)\pi(1).
\]
\end{definition}
Then we have that $\pi(i+1)=rot(\pi)(i)$ if $i\in [n-1]$ and $\pi(1)=rot(\pi)(n)$. Notice that this is `backwards' (or `right-to-left') rotation, and that the corresponding `forwards' (or `left-to-right') rotation $\sigma(1)\dots\sigma(n)\mapsto\sigma(n)\sigma(1)\dots\sigma(n-1)$ is its inverse (thus $rot$ is a bijection). Denote the permutation given by $m$ applications of $rot$ to $\pi$ as $rot^m(\pi)$, which has $i$th entry $rot^m(\pi)(i)$. \\

We now give a small but illustrative example of homomesy and the rotation map in action.
\begin{example}\label{ex1}
Consider the triple $(S_3, rot, fp)$ where $fp$ counts the number of fixed points of a permutation. We compute the average value of $fp$ over each of the two orbits of size three generated by $rot$ on $S_3$:
    \[
\begin{tabular}{c c c|c}
    $fp(123)=3$; & $fp(231)=0$; & $fp(312)=0$ & $\frac{1}{3}(3+0+0)=1$ \\
    $fp(213)=1$; & $fp(132)=1$; & $fp(321)=1$ & $\frac{1}{3}(1+1+1)=1$ 
\end{tabular}.
    \]
    Since the average value of $fp$ over each orbit is $1$, we have that $(S_3, rot, fp)$ is $1$-mesic. We will see later (Prop. \ref{exc1}) that this result does not depend on the size of the permutations. In general, we are interested only in those instances of homomesy which hold in the general case, rather than for some specific $n$.
\end{example}

It is not a coincidence that both orbits generated by $rot$ on $S_3$ contain exactly three elements. We may characterize the orbit structure of $rot$ on the general $S_n$ by the following two lemmas.
\begin{lem}{(Orbit Structure)}\label{orb_struc} All orbits under the rotation map on $S_n$ have size $n$.
\end{lem}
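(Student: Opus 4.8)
The plan is to identify the size of the orbit $\mathcal{O}_\pi$ with the smallest positive integer $m$ for which $rot^m(\pi)=\pi$, and then to show that this minimal period is always exactly $n$. First I would note that since $S_n$ is finite and $rot$ is a bijection (as observed just after Definition \ref{map_def}), the orbit of $\pi$ is a single cycle under $rot$, and its cardinality equals this minimal return time. So the whole statement reduces to computing that minimal $m$.

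Next I would write iterated rotation explicitly in terms of positions taken modulo $n$. Reading off Definition \ref{map_def}, one application satisfies $rot(\pi)(i)=\pi(i+1)$ for $i\in[n-1]$ together with $rot(\pi)(n)=\pi(1)$; identifying positions with residues modulo $n$ (so that $n+1\equiv 1$), both cases collapse into the single formula $rot(\pi)(i)=\pi(i+1)$. Iterating yields $rot^m(\pi)(i)=\pi(i+m)$ for all $m\geq 0$, with the index $i+m$ read modulo $n$. In particular $rot^n(\pi)(i)=\pi(i+n)=\pi(i)$, so $rot^n(\pi)=\pi$; this already shows every orbit size divides $n$ and is therefore at most $n$.

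It then remains to rule out any strictly smaller period. Suppose $rot^m(\pi)=\pi$ for some $m$ with $0<m<n$. By the formula above this means $\pi(i+m)=\pi(i)$ for every position $i$, with indices taken modulo $n$. But $\pi$ is a bijection on $[n]$, hence injective, so the equality $\pi(i+m)=\pi(i)$ forces $i+m\equiv i\pmod n$, that is $n\mid m$, which is impossible for $0<m<n$. Thus no proper period exists, the minimal period is exactly $n$, and every orbit has size $n$.

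Because the whole argument is a direct computation, I do not anticipate a genuine obstacle; the only step requiring any care is the modular bookkeeping that turns the two-case definition of $rot$ into the uniform formula $rot^m(\pi)(i)=\pi(i+m)$. Once that formula is in hand, injectivity of $\pi$ closes the proof immediately.
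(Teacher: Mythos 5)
Your proof is correct and takes essentially the same approach as the paper: both derive the formula $rot^m(\pi)(i)=\pi(i+m \bmod n)$ and conclude that the minimal return time is exactly $n$. If anything, your version is slightly more careful, since you explicitly invoke injectivity of $\pi$ to rule out a period $0<m<n$, a step the paper leaves implicit when it simply asserts that the permutations $rot^m(\pi)$ for $0\leq m<n$ are distinct.
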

\begin{proof}
    Notice that by Definition \ref{map_def}, $\pi(i+m\mod n)=rot^m(\pi)(i)$ (stipulating that $\pi(0)$ means $\pi(n)$). For the first time when $m=n$ we have $(i+m\mod n)=i$, that is $\pi(i+n\mod n)=rot^n(\pi)(i)=\pi(i)$. So $\pi$ and $rot^n(\pi)$ are the same permutation. Each permutation where $0\leq m< n$ is unique, and these begin to repeat starting with the $n$th permutation given by successive applications of $rot$.
\end{proof}
Intuitively, the action of $rot$ places every value in $[n]$ in each of the $n$ possible positions exactly once in each orbit. For instance, in Example \ref{ex1} above, the value $1$ occupies position $1$, position $2$, and position $3$ each exactly once in each of the two orbits. We can formalize this intuition as follows:
\begin{lem}{(Pair Lemma)}\label{pair_lemma}
For each pair $(i,j)\in [n]\times [n]$, $j=\pi(i)$ for exactly one permutation $\pi$ in each orbit generated by the rotation map.
\end{lem}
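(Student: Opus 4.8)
The plan is to reuse the positional indexing identity established in the proof of the Orbit Structure lemma, namely that for any representative $\pi$ of an orbit and any $m$,
\[
rot^m(\pi)(i) = \pi(i+m \bmod n),
\]
with the convention that a residue of $0$ is read as $n$. This identity is the whole engine of the argument: it converts the question ``in which element of the orbit does $j$ sit in position $i$?'' into a purely arithmetic one about residues modulo $n$.

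First I would fix an orbit $\mathcal{O}$, choose any representative $\pi \in \mathcal{O}$, and fix the pair $(i,j)$. Because $\pi$ is a bijection, there is a unique position $p \in [n]$ with $\pi(p) = j$. Substituting into the identity above, the requirement $rot^m(\pi)(i) = j$ reads $\pi(i+m \bmod n) = \pi(p)$, and applying the injectivity of $\pi$ once more this is equivalent to the single congruence
\[
i + m \equiv p \pmod n, \qquad \text{that is,} \qquad m \equiv p - i \pmod n.
\]

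This congruence has exactly one solution $m$ in the range $\{0, 1, \dots, n-1\}$. To finish, I would invoke the Orbit Structure lemma: the orbit consists of precisely the $n$ distinct permutations $rot^0(\pi), \dots, rot^{n-1}(\pi)$, so the assignment sending a residue $m$ in this range to the permutation $rot^m(\pi)$ is a bijection onto $\mathcal{O}$. The unique solution $m$ therefore names exactly one permutation of the orbit in which $j$ occupies position $i$, giving both existence and uniqueness.

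The step requiring the most care is the last one: uniqueness of the \emph{permutation} (as opposed to uniqueness of the residue $m$) rests entirely on the orbit having full size $n$, so that distinct residues genuinely give distinct permutations. If an orbit could be smaller than $n$, a single permutation might be hit by several values of $m$ and the count would break, which is exactly why the Orbit Structure lemma is a prerequisite here. The only other point to watch is consistent bookkeeping of the $0 \mapsto n$ indexing convention when reducing modulo $n$.
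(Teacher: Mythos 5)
Your proof is correct, and it runs on the same engine as the paper's: the identity $rot^m(\pi)(i)=\pi(i+m \bmod n)$ together with the Orbit Structure lemma. The existence halves essentially coincide — the paper sets $p=\pi^{-1}(j)$ and writes the exponent by cases ($p-i$ if $p>i$, $n-i+p$ if $p<i$), which is precisely the unique residue of $p-i$ modulo $n$ that your single congruence produces, so your formulation merely unifies its case analysis. Where you genuinely differ is uniqueness. The paper argues globally by pigeonhole: existence holds for each of the $n$ values $j$, these occurrences lie in distinct permutations, and the orbit has only $n$ elements, so no value can occupy position $i$ a second time. You argue locally for the fixed pair: $rot^m(\pi)(i)=j$ is equivalent, via injectivity of $\pi$, to $m\equiv p-i \pmod{n}$, which has exactly one solution in $\{0,\dots,n-1\}$, and distinct residues give distinct permutations because the orbit has full size $n$. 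The trade-off is small but real: your route delivers existence and uniqueness in one stroke and makes explicit exactly where full orbit size is needed (a point you correctly flag), while the paper's counting argument gets uniqueness almost for free once existence is established, at the cost of being indirect — it never exhibits the offending second occurrence, it just rules it out by cardinality.
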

\begin{proof}
    We show first \textit{(a)} that $\pi(i)=j$ for at least one permutation $\pi$ in each orbit, and then \textit{(b)} that this holds for no more than one permutation.

\begin{itemize}
    \item[\textit{(a)}]  Consider $(i,j)\in [n]\times [n]$ and $\pi \in \mathcal{O}$. If $\pi(i)=j$, we are done. Otherwise, writing $p=\pi^{-1}(j)$, we have that $rot^{p-i}(\pi)(i)=j$ if $p>i$, and $rot^{n-i+p}(\pi)(i)=j$ if $p<i$.
    \item[\textit{(b)}] We have already accounted for the value occupying the $i$-th position for $n$ permutations in each orbit. Since all orbits have size exactly $n$ (Lemma \ref{orb_struc}), there is no value that occupies the $i$-th position a second time.
\end{itemize}
\end{proof}

Armed with this description of the orbits generated by $rot$, we may make the following characterization. The union of multisets $\bigcup_{m=0}^{n-1}\{\{(i,rot^m(\pi)(i))\mid 1\leq i\leq n\}\}$ contains all preimage-image pairs achieved in the orbit of $\pi$, with multiplicity. Lemma \ref{pair_lemma} tells us not only that each element in $[n]\times[n]$ appears in this union, but also that each element appears only once. We may therefore consider this multiset union unambiguously as a set; denoting it $\zeta_\pi$, we have for all orbits $\mathcal{O}_\pi$ that
\[
\zeta_\pi=\zeta = [n]\times [n].
\]
This equality will be especially useful because it removes all information about any particular orbit from the definition of the set. The orbit-averages of many of the statistics studied in this paper can be computed by evaluating some characteristic of $\zeta$ modified in some way. Now we have a means by which to show that this value is the same for all orbits.
\subsection{Methods: FindStat and Sagemath}

The methods used in this paper and outlined in this section were introduced in \cite{ELMSW22}. FindStat \cite[\url{www.findstat.org}]{FindStat} is an online database of combinatorial statistics and maps. As of 1 November 2023, it contains 400 permutation statistics, and 31 maps from $S_n$ to itself. FindStat has a built in interface with SageMath \cite{sage}: all FindStat statistics are accompanied by valid SageMath code, which can be accessed to compute values of the statistic. Hence we are able to check all 400 statistics for homomesy for $S_n$ with rotation for values of $n$ up to 6 with remarkable efficiency. This is especially helpful not only because checking even one statistic for homomesy by hand takes significant computational work, but also because it is not always intuitive which statistics exhibit homomesy. The following code generates a list of all permutation statistics on FindStat that are homomesic for $S_n$ (for a particular value of $n$) with $rot$:

\begin{figure}[H]
\begin{python}        
  sage: from sage.databases.findstat import FindStatMaps, FindStatStatistics  # Access to the FindStat methods
  ....: findstat()._allow_execution = True  # To run map and statistic codes from FindStat
  ....: F = DiscreteDynamicalSystem(Permutations(n), findmap(279))  # findmap(279) is rotation
  ....: for st in FindStatStatistics("Permutations"):
  ....:     if F.is_homomesic(st):
  ....:         print(st.id())  # Print list of candidates
\end{python}
\end{figure}
Based on the output of this code, we knew for which statistics to seek out proofs of the general case. Moreover, we know that any FindStat permutation statistic \textit{not} on this list is \textit{not} homomesic for permutations with rotations: SageMath has computed a counterexample. Our results are presented in the following sections. Throughout we refer to each statistic by its FindStat identification number.

\subsection{Acknowledgements} 
We thank Jim Propp, for suggesting the investigation into alternative toggling orders, and Tom Roby and Theo Douvropoulos, for thoughtful discussions of this work.


\section{Results: rotation} \label{sec2}
In this section we identify and prove 34 instances of homomesy for permutations with rotation. Homomesic statistics are divided into 4 groups: $i$th entry statistics; statistics related to excedances; statistics related to inversions; and miscellaneous statistics.

\subsection{$i$th entry statistics}

We begin with the simplest homomesy result on permutations with rotation. A similar result on binary words with a fixed number of 1s, rather than on permutations, is given in \cite[Example 2]{Roby2016}.

\begin{thm} \label{thm1}
The value of the $i$th entry of a permutation is $\frac{n+1}{2}$-mesic for permutations with rotation.
\end{thm}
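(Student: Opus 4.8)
The plan is to reduce the statement directly to the Pair Lemma (Lemma \ref{pair_lemma}), which already captures exactly the combinatorial content we need. Fix a position $i \in [n]$ and consider the statistic $st(\pi) = \pi(i)$. The goal is to evaluate the orbit average $\frac{1}{\#\mathcal{O}}\sum_{\pi \in \mathcal{O}} \pi(i)$ and show that it equals $\frac{n+1}{2}$ independently of the orbit $\mathcal{O}$.

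First I would invoke Lemma \ref{orb_struc} to record that every orbit has size exactly $n$, so the denominator is fixed. The key observation is then immediate from Lemma \ref{pair_lemma}: for the fixed position $i$ and each value $j \in [n]$, there is exactly one permutation $\pi$ in the orbit with $\pi(i) = j$. Equivalently, as $\pi$ ranges over a single orbit, the values $\pi(i)$ run over $\{1, 2, \dots, n\}$ with each value occurring once.

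Consequently the orbit sum is a triangular number,
\[
\sum_{\pi \in \mathcal{O}} \pi(i) = \sum_{j=1}^{n} j = \frac{n(n+1)}{2},
\]
and dividing by the orbit size $n$ yields the average $\frac{n+1}{2}$. Since this computation used no feature of $\mathcal{O}$ beyond its size together with the Pair Lemma, the average is the same constant across every orbit, which is precisely the homomesy claim. One could phrase the same argument through the set $\zeta = [n]\times[n]$: fixing the first coordinate to $i$ and summing the second coordinate over the ``column'' $\{(i,j)\mid j \in [n]\}$ produces the identical triangular sum.

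I do not expect a genuine obstacle, as all the difficulty has been front-loaded into establishing the Pair Lemma (which itself rests on Lemma \ref{orb_struc}). The only point requiring care is the bookkeeping: $i$ must be held fixed while $\pi$ varies over the orbit, so that we are reading off one value per \emph{position across the orbit} rather than the entries of a single permutation. With that distinction observed, the result follows in a line or two.
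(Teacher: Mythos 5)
Your proposal is correct and follows essentially the same route as the paper: both rest entirely on the Pair Lemma (Lemma \ref{pair_lemma}) to conclude that, for a fixed position $i$, the values $\pi(i)$ run over $\{1,\dots,n\}$ exactly once per orbit, so the orbit average is the average of $1,\dots,n$, namely $\frac{n+1}{2}$. Your explicit triangular-number sum and the appeal to Lemma \ref{orb_struc} for the orbit size are just slightly more detailed bookkeeping of the same argument.
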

\begin{proof}
From Lemma \ref{pair_lemma}, we have that each $j\in [n]$ occupies each position from $1,\dots,n$ exactly once in each orbit. In particular this means that each position $i$ is occupied exactly once in every orbit by each value $j\in [n]$.\\

So the average value of the $i$th entry in any orbit is the average of the values $1,\dots,n$, which is $\frac{n+1}{2}$; that is, the value of the $i$th entry of a permutation is $\frac{n+1}{2}$-mesic for the permutations under the rotation map.
\end{proof} 
Four instances of homomesy follow immediately:

\begin{cor}
The following statistics from the Findstat database are $\frac{n+1}{2}$-mesic for permutations with rotation:
\end{cor}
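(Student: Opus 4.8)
The plan is to deduce each of the four instances directly from Theorem \ref{thm1}, which already shows that for every fixed position $i \in [n]$ the statistic $\pi \mapsto \pi(i)$ is $\frac{n+1}{2}$-mesic for permutations with rotation. Because the corollary asserts the \emph{same} orbit average $\frac{n+1}{2}$ for each statistic in the list, my expectation is that each FindStat statistic coincides, as a function on $S_n$, with the value $\pi(i)$ at one distinguished position $i$ — for example the first entry $\pi(1)$ or the last entry $\pi(n)$. Each case then reduces to a single invocation of Theorem \ref{thm1} for the appropriate choice of $i$.

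First I would, for each of the four statistics, unpack its FindStat definition and verify that it simply returns the value occupying some fixed position, rather than counting, comparing, or otherwise transforming the entries. Concretely, I would confirm that the $k$th listed statistic satisfies $st_k(\pi) = \pi(i_k)$ for all $\pi \in S_n$ and identify the associated position $i_k$. Once this identification is made, Theorem \ref{thm1} immediately yields that $st_k$ is $\frac{n+1}{2}$-mesic.

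Should any listed statistic instead be an affine function of an entry-value — most plausibly a complemented entry such as $n+1-\pi(i)$, or a combination of several entries — I would appeal to Lemma \ref{lin_com}. Writing the statistic as a $\mathbb{Q}$-linear combination of entry-values together with constant statistics, each summand is homomesic (the entry-values by Theorem \ref{thm1}, and constants trivially), so the combination is homomesic; one then checks that its weighted orbit average collapses to $\frac{n+1}{2}$. In practice I anticipate the direct identification of the previous step to suffice in all four cases, since the target average $\frac{n+1}{2}$ forces any such combination to reduce to essentially a single entry.

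The main obstacle is not mathematical depth but accurate bookkeeping: the only genuine work is matching each FindStat statistic to the position $i$ it reads off, which requires parsing each definition carefully enough to be certain the statistic is truly an entry-value (or a simple affine variant) rather than merely numerically coinciding with one for the small values of $n$ tested by computer. Once the identifications are pinned down, every case closes at once.
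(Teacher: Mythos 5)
Your proposal matches the paper's argument exactly: each of the four statistics (Stat 54, 740, 1806, 1807) is precisely the entry value $\pi(i)$ at a fixed position ($i=1$, $i=n$, $i=\lceil\frac{n+1}{2}\rceil$, $i=\lfloor\frac{n+1}{2}\rfloor$ respectively), so the corollary follows immediately from Theorem \ref{thm1}, which is all the paper does. Your fallback via Lemma \ref{lin_com} is unnecessary here but harmless.
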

\begin{itemize}
    \item Stat 54: \textit{the first entry of a permutation} 
    \item Stat 740: \textit{the last entry of a permutation} 
    \item Stat 1806: \textit{the upper middle entry of a permutation} (i.e. $\pi(\lceil\frac{n+1}{2}\rceil)$)
    \item Stat 1807: \textit{the lower middle entry of a permutation} (i.e. $\pi(\lfloor\frac{n+1}{2}\rfloor)$)
\end{itemize}


\subsection{Statistics related to excedances}\label{exc_sec}
Excedances and related statistics account for 19 of the homomesies proven in this paper. \\

The standard \textbf{excedance} for a permutation $\pi$ is an index $i$ such that $\pi(i)>i$, and the related statistic is the number of excedances a permutation has. It is clear why this is called an excedance: the entry \textit{exceeds} its index in value. In the same way a \textbf{deficiency} is an index $i$ such that $\pi(i)<i$. We can weaken these inequalities, allowing $\pi(i)=i$, which gives a \textbf{weak excedance} or \textbf{weak deficiency}. Disallowing respectively $\pi(i)=i+1$ or $\pi(i)=i-1$ gives a \textbf{big excedance} or \textbf{big deficiency}. Each of these features represents a different statistic that counts it.\\

We call an index $i$  with $\pi(i)=i+k$ a \textbf{$k$-excedance} in $\pi$. So a \textbf{fixed point} is also a \textbf{$0$-excedance}, a \textbf{$1$-excedance} (also called a \textbf{small excedance}) is an entry $i$ such that $\pi(i)=i+1$, a \textbf{$(-2)$-excedance} is also a \textbf{$2$-deficiency}, and so on. This can also be considered cyclically, if we allow that $\pi(i)=(i+k \mod n)$ is a $k$-excedance. For instance, $\pi(n-1)=2$ is a cyclical $3$-excedance. We may combine these features in the expected ways: a \textbf{small weak excedance} is an index $i$ with $\pi(i)\in \{i,i+1\}$ (and so on). The following theorem lists the results of this section.
\\
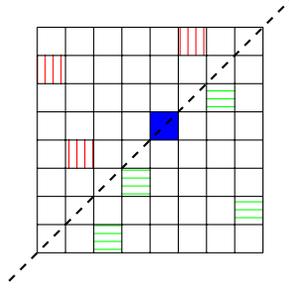
\begin{figure}
    \begin{tikzpicture}[pattern=vertical lines, pattern color=red, scale=0.375,
         baseline={([yshift=-.5ex]current bounding box.center)},
         cell71/.style={pattern=horizontal lines, pattern color=green}, cell65/.style={pattern=horizontal lines, pattern color=green},
         cell44/.style={fill=blue},
         cell57/.style={fill},
         cell32/.style={pattern=horizontal lines, pattern color=green}, cell20/.style={pattern=horizontal lines, pattern color=green},
         cell13/.style={fill}, cell06/.style={fill},
         ]
         \foreach \i in {0,...,7}
         \foreach \j in {0,...,7}
         \path[cell\i\j/.try] (\i,\j) rectangle +(1,1);
         \draw grid (8,8);
         \draw[thick,dashed] (-1,-1) -- (9,9);
\end{tikzpicture}
\caption{Excedances, Deficiencies, and Fixed Points}
\small
In this graphical representation of the permutation 74135862 of [8], squares with red vertical lines represent excedances, squares with green horizontal lines deficiencies, and the plain blue square a fixed point.
\end{figure}

\begin{thm}\label{sec3_thm}
The following statistics from the Findstat database related to excedances are homomesic for permutations with rotation:
\begin{itemize}
    \item Stat 22: \textit{the number of fixed points of a permutation} (average: 1)
    \item Stat 155: \textit{the number of excedances of a permutation} (average: $\frac{n-1}{2}$)
    \item Stat 213: \textit{the number of weak excedances of a permutation} (average: $\frac{n+1}{2}$)
    \item Stat 235: \textit{the number of indices that are not cyclical small excedances} (average: $n-1$)
    \item Stat 236: \textit{the number of cyclical small weak excedances\footnote{Permutations for which this statistic has value $0$ are sometimes called `ordinary m\'enage permutations.' See \cite{menage}.}} (average: $2$)
    \item Stat 237: \textit{the number of small excedances} (average: $\frac{n-1}{n}$)
    \item Stat 238: \textit{the number of indices that are not small weak excedances} (average: $\frac{(n-1)^2}{n}$)
    \item Stat 239: \textit{the number of small weak excedances\footnote{Permutations for which this statistic has value $0$ are sometimes called `straight m\'enage permutations.'}} (average: $1+\frac{n-1}{n}$)
    \item Stat 240: \textit{the number of indices that are not small excedances} (average: $n-\frac{n-1}{n}$)
    \item Stat 241: \textit{the number of cyclical small excedances} (average: $1$)
    \item Stat 242: \textit{the number of indices that are not cyclical small weak excedances} (average: $n-2$)
    \item Stat 648: \textit{the number of 2-excedances of a permutation} (average: $\frac{n-2}{n}$)
    \item Stat 649: \textit{the number of 3-excedances of a permutation} (average: $\frac{n-3}{n}$)
    \item Stat 673: \textit{the size of the support of a permutation (i.e. the number of non-fixed points)} (average: $n-1$)
    \item Stat 702: \textit{the number of weak deficiencies of a permutation} (average: $\frac{n+1}{2}$)
    \item Stat 703: \textit{the number of deficiencies of a permutation} (average: $\frac{n-1}{2}$)
    \item Stat 710: \textit{the number of big deficiencies of a permutation} (average: $\frac{(n-1)(n-2)}{2n}$)
    \item Stat 711: \textit{the number of big excedances of a permutation} (average: $\frac{(n-1)(n-2)}{2n}$)
    \item Stat 1439: \textit{the number of even deficiencies and of odd excedances} (average: $\lfloor \frac{n+2}{2} \rfloor$)
\end{itemize}
\end{thm}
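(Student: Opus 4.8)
The plan is to prove all nineteen homomesies simultaneously by exploiting the identity $\zeta_\pi=\zeta=[n]\times[n]$ established above, and then to read off each average by an elementary lattice-point count. The first and decisive observation is that every statistic in the list is \emph{position-local}: it can be written as
\[
st(\pi)=\sum_{i=1}^{n} f\bigl(i,\pi(i)\bigr)
\]
for an indicator $f\colon[n]\times[n]\to\{0,1\}$ recording whether the pair $(i,\pi(i))$ contributes. For example, fixed points use $f(i,j)=[\,j=i\,]$, excedances $f(i,j)=[\,j>i\,]$, $k$-excedances $f(i,j)=[\,j=i+k\,]$, and cyclical small excedances $f(i,j)=[\,j\equiv i+1 \pmod n\,]$.

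Given this form, the key step is a single interchange of summation. Summing over an orbit $\mathcal O$ and applying the Pair Lemma (Lemma \ref{pair_lemma}) gives
\[
\sum_{\pi\in\mathcal O} st(\pi)=\sum_{\pi\in\mathcal O}\sum_{i=1}^n f\bigl(i,\pi(i)\bigr)=\sum_{(i,j)\in\zeta} f(i,j)=\sum_{(i,j)\in[n]\times[n]} f(i,j),
\]
a quantity that does not depend on $\mathcal O$. Since every orbit has size $n$ (Lemma \ref{orb_struc}), the orbit average is the constant $\frac1n\sum_{(i,j)\in[n]\times[n]}f(i,j)$, so $st$ is homomesic; it then remains only to evaluate this grid count for each $f$.

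I would organize the eighteen ``classical'' cases into a few families of lattice counts. The diagonal-band statistics (fixed points and $k$-excedances) count points on the line $j=i+k$, of which there are $n-k$, giving average $\tfrac{n-k}{n}$. The half-plane statistics (weak and strict excedances and deficiencies) count points on one side of the main diagonal, namely $\binom{n}{2}$ or $\binom{n}{2}+n$ points, giving averages $\tfrac{n-1}{2}$ and $\tfrac{n+1}{2}$. Big excedances and big deficiencies are a half-plane with the adjacent band removed, $\binom{n}{2}-(n-1)=\tfrac{(n-1)(n-2)}{2}$ points. The cyclical variants count modular bands $j\equiv i+k\pmod n$, each containing exactly $n$ points, so the cyclical small (weak) excedance counts are $n$ and $2n$. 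Finally, the four complement statistics (Stat 235, 238, 240, 242) require no new work: each equals $n$ minus its positive counterpart, and since the constant statistic $n$ is trivially $n$-mesic, Lemma \ref{lin_com} delivers their averages at once.

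The one genuinely delicate case, which I expect to be the main obstacle, is Stat 1439, whose indicator carries a parity constraint in addition to the sign of the displacement $j-i$. Here the grid count no longer closes into a single clean formula but splits according to $n\bmod 2$, summing band sizes $n-|d|$ over the appropriate residues of $d$; this case analysis is exactly what produces the floor in the stated average $\lfloor\tfrac{n+2}{2}\rfloor$, and it is the only place where the uniform argument must branch. Apart from this parity bookkeeping, the entire theorem collapses to the single $\zeta$-argument above together with routine counting.
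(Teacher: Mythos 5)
Your proposal is correct and takes essentially the same route as the paper: your interchange-of-summation step is precisely the paper's identity $\zeta_\pi=[n]\times[n]$ (Lemma \ref{pair_lemma}) applied to position-local statistics, the diagonal-band, half-plane, big-excedance and cyclical counts match the paper's Propositions \ref{exc1} and \ref{exc}, and the complement statistics are dispatched by Lemma \ref{lin_com} exactly as the paper does with its asterisked cases. One small correction on your Stat 1439 sketch: the parity constraint there is on the \emph{position} $i$ (even weak deficiencies, odd weak excedances), not on the displacement $j-i$, so the grid count is $\sum_{\text{odd } i\in[n]}(n-i)+\sum_{\text{even } i\in[n]}(i-1)+n$ (strict excedances, strict deficiencies, then all $n$ fixed points) rather than a sum of full band sizes over residues of $d$ --- this changes nothing structurally, and the resulting case analysis on the parity of $n$ yields $\lfloor\frac{n+2}{2}\rfloor$ just as you predict and as the paper's own proof shows.
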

\bigskip
We begin with those statistics counting positions whose occupants are limited to a particular value, subsequently dealing with statistics counting positions occupied by values from a specified range.
\begin{prop} \label{exc1}(Statistics 22, 235*, 236*, 237, 238*, 239*, 240*, 241, 242*, 648, 649, 673*\footnote{Statistics marked with an asterisk are linear combinations of statistics proved directly to be homomesic for permutations with rotation; we apply Lemma \ref{lin_com}.}
For any $k\in [n]$, the number of $k$-excedances is $\frac{n-k}{n}$-mesic for permutations with rotation, and $1$-mesic when considered cyclically.
\end{prop}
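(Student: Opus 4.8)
The plan is to reduce the orbit-average of the number of $k$-excedances to a pure counting problem on the pair set $\zeta = [n]\times[n]$, leaning on the Pair Lemma (Lemma \ref{pair_lemma}) and the orbit-size result (Lemma \ref{orb_struc}). First I would recast the defining condition of the statistic in the language of pairs: an index $i$ is a $k$-excedance of $\pi$ exactly when $\pi(i) = i+k$, which, writing $j = \pi(i)$, says precisely that $(i,j)$ is a pair in $[n]\times[n]$ with $j - i = k$. So the number of $k$-excedances of a single permutation is the number of its image-preimage pairs lying on the diagonal line $j = i + k$.

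Next I would sum the statistic over a whole orbit $\mathcal{O}_\pi$. Summing the number of $k$-excedances across all $n$ permutations of the orbit counts all pairs $(i, rot^m(\pi)(i))$ with $j - i = k$ as $m$ and $i$ range over their allowed values. By the Pair Lemma each element of $[n]\times[n]$ occurs exactly once among these pairs, so by the characterization $\zeta_\pi = \zeta = [n]\times[n]$ the orbit-sum equals simply the number of pairs $(i,j)\in[n]\times[n]$ satisfying $j - i = k$. This is the crux of the method: the orbit-sum no longer depends on the particular orbit, which is exactly what homomesy requires.

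I would then carry out the count. Imposing $j = i + k$ together with $1 \le i \le n$ and $1 \le j \le n$ forces $1 \le i \le n - k$, giving $n - k$ admissible pairs. Dividing this orbit-sum by the orbit size $n$ yields the orbit-average $\tfrac{n-k}{n}$, and since this value is identical across every orbit the statistic is $\tfrac{n-k}{n}$-mesic. For the cyclical version I would replace the condition $j = i + k$ by $j \equiv i + k \pmod n$; now for each of the $n$ choices of $i\in[n]$ there is a unique $j\in[n]$ meeting the congruence, so there are exactly $n$ such pairs in $\zeta$, and dividing by $n$ gives orbit-average $1$.

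I do not expect a substantive obstacle once the statistic is phrased as a condition on $\zeta$, since everything collapses to counting lattice points on a (possibly wrapped) diagonal. The only points demanding care are getting the boundary of the index range right in the non-cyclical count and verifying the clean wrap-around — each residue class mod $n$ hit exactly once — in the cyclical count; both follow directly from the structure of $[n]\times[n]$.
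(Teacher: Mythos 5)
Your proof is correct and is essentially the paper's own argument: both reduce the orbit-sum of $k$-excedances to counting pairs $(i,i+k)$ in $\zeta_\pi=[n]\times[n]$ via the Pair Lemma, obtain $n-k$ pairs (or $n$ in the cyclical case), and divide by the orbit size $n$. The only cosmetic difference is that you cite Lemma \ref{orb_struc} explicitly for the orbit size, which the paper leaves implicit.
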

\begin{proof} The number of $k$-excedances in the orbit of $\pi$ is counted by the number of pairs $(i,i+k)\in \zeta_\pi$. By Lemma \ref{pair_lemma}, this is the same as the number of pairs $(i,i+k)\in[n]\times[n]$. There is one such pair for each $i\in [n-k]$. Hence each orbit has $n-k$ total $k$-excedances, and the orbit-average is $\frac{n-k}{n}$.
\\ 

Similarly, the number of cyclical $k$-excedances in the orbit of $\pi$ is counted by the number of pairs $(i,i+k\mod n)\in[n]\times[n]$. There is one such pair for each $i\in[n]$. Hence each orbit has $n$ total cyclical $k$-excedances, and the orbit average is $\frac{n}{n}=1$.
\end{proof}

\begin{prop}\label{exc} (Statistics 155, 213*, 702*, 703, 710*, 711*)
The number of excedances and the number of deficiencies of a permutation are both $\frac{n-1}{2}$-mesic for permutations with rotation.
\end{prop}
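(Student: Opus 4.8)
The plan is to reduce the orbit-sum of the excedance count to a counting problem over the set $\zeta = [n]\times[n]$, exactly as in Proposition \ref{exc1}. Recall that the number of excedances of a permutation $\pi$ counts the indices $i$ with $\pi(i)>i$, i.e.\ the pairs $(i,\pi(i))$ with second coordinate exceeding the first. Summing this statistic over the whole orbit $\mathcal{O}_\pi$ therefore counts the pairs $(i,j)$ with $j>i$ among all preimage-image pairs occurring in the orbit. By Lemma \ref{pair_lemma}, these pairs are exactly the elements of $\zeta_\pi = \zeta = [n]\times[n]$, each appearing once.

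The first step is thus to count the pairs $(i,j)\in[n]\times[n]$ with $j>i$. There is exactly one such pair for each $2$-element subset $\{i,j\}\subseteq[n]$, so their number is $\binom{n}{2}=\frac{n(n-1)}{2}$. Dividing by the orbit size $n$ (Lemma \ref{orb_struc}) yields the orbit average $\frac{1}{n}\cdot\frac{n(n-1)}{2}=\frac{n-1}{2}$, which is independent of the orbit. The second step is to observe that deficiencies are handled by the same argument with the inequality reversed: the number of pairs $(i,j)\in[n]\times[n]$ with $j<i$ is again $\binom{n}{2}$, so the deficiency count is likewise $\frac{n-1}{2}$-mesic. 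The symmetry $j>i \leftrightarrow j<i$ on $[n]\times[n]$ makes the two computations identical, which is a convenient internal check.

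I expect no serious obstacle here; the only point requiring care is making sure the reduction to $\zeta$ is applied to the \emph{whole orbit} rather than to a single permutation, so that Lemma \ref{pair_lemma} can be invoked to replace $\zeta_\pi$ by $[n]\times[n]$. As an alternative route, one could instead write the number of excedances as $\sum_{k\geq 1}(\text{number of }k\text{-excedances})$ and apply Lemma \ref{lin_com} together with Proposition \ref{exc1}, giving orbit average $\sum_{k=1}^{n-1}\frac{n-k}{n}=\frac{1}{n}\sum_{j=1}^{n-1}j=\frac{n-1}{2}$, with the deficiency count obtained symmetrically. This second approach also explains the asterisks in the statement: statistics \textbf{155} and \textbf{703} are proved directly, while the remaining statistics (\textbf{213}, \textbf{702}, \textbf{710}, \textbf{711}) follow as linear combinations of these and of the $k$-excedance counts of Proposition \ref{exc1} via Lemma \ref{lin_com}.
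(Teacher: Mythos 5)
Your proof is correct and takes essentially the same approach as the paper's: both reduce the total number of excedances over an orbit to $\#\{(i,j)\in[n]\times[n]\mid j>i\}$ via Lemma \ref{pair_lemma}, divide by the orbit size $n$ from Lemma \ref{orb_struc}, and handle deficiencies by the symmetric argument with the inequality reversed. The only cosmetic difference is that you count the pairs as $\binom{n}{2}$ two-element subsets while the paper sums $n-i$ over $i\in[n]$; both give $\frac{n(n-1)}{2}$, and your remark about the asterisked statistics following via Lemma \ref{lin_com} matches the paper's convention.
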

\begin{proof} We begin with excedances. The number of excedances in the orbit of $\pi$ is exactly the size of the set $\{(i,j)\in \zeta_\pi\mid j>i\}$, which by Lemma \ref{pair_lemma} is the same set as $\{(i,j)\in [n]\times[n]\mid j>i\}$ for all orbits. These all have the same size, so that excedances are homomesic for permutations with rotation with orbit-average $\frac{1}{n}\#\{(i,j)\in [n]\times[n]\mid j>i\}$. For each $i\in [n]$, there are plainly $n-i$ pairs $(i,j)$ with $j>i$. Hence the orbit-average value is $\frac{1}{n}\sum_{i\in[n]}(n-i)=\frac{1}{n}\frac{(n-1)n}{2}=\frac{n-1}{2}$. \\

A parallel argument shows that deficiencies are homomesic for permutations with rotation with orbit-average value $\frac{1}{n}\#\{(i,j)\in [n]\times[n]\mid j<i\}=\frac{n-1}{2}$. \\
\end{proof}

The statistic counting the number of even weak deficiencies and odd weak excedances merits individual treatment because of its more particular definition:

\begin{prop} (Statistic 1439)
    The number of even weak deficiencies and odd weak excedances is $\lfloor \frac{n+2}{2} \rfloor$-mesic for permutations with rotation.
\end{prop}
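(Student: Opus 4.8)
The plan is to proceed exactly as in Propositions \ref{exc1} and \ref{exc}, reducing the orbit total to a count of lattice points in $[n]\times[n]$ via the set $\zeta=[n]\times[n]$ furnished by Lemma \ref{pair_lemma}. First I would fix the reading of the statistic: an index $i$ is counted precisely when either $i$ is even and $\pi(i)\le i$ (an even weak deficiency) or $i$ is odd and $\pi(i)\ge i$ (an odd weak excedance). Since these two conditions force opposite parities of $i$, no index is counted twice and fixed points are accounted for exactly once, so the statistic is a genuine sum of two position-restricted counts. Note that this is not visibly a rational linear combination of the statistics already treated — the parity restriction on positions is not among them — so a direct argument is needed rather than an appeal to Lemma \ref{lin_com}.

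Next I would translate the count to $\zeta$. Because every pair $(i,j)\in[n]\times[n]$ occurs exactly once as a preimage-image pair $(i,\pi(i))$ across each orbit, the total value of the statistic over an orbit is
\[
T=\#\{(i,j)\in[n]\times[n]\mid i\text{ even},\ j\le i\}+\#\{(i,j)\in[n]\times[n]\mid i\text{ odd},\ j\ge i\}.
\]
For each even $i$ there are exactly $i$ choices of $j\le i$, and for each odd $i$ there are exactly $n-i+1$ choices of $j\ge i$, so $T=\sum_{i\text{ even}}i+\sum_{i\text{ odd}}(n-i+1)$. This quantity is manifestly independent of the orbit, which is exactly the homomesy; the orbit average is $T/n$ by Lemma \ref{orb_struc}.

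Finally I would evaluate $T/n$. Writing $O=\lceil n/2\rceil$ for the number of odd indices and using $\sum_{i\text{ odd}}(n-i+1)=O(n+1)-\sum_{i\text{ odd}}i$, the total collapses to $T=O(n+1)+\bigl(\sum_{i\text{ even}}i-\sum_{i\text{ odd}}i\bigr)$, where the alternating sum is computed by a short telescoping. The only real bookkeeping is the split on the parity of $n$: when $n$ is even one finds $T=\tfrac{n}{2}(n+2)$ and hence average $\tfrac{n+2}{2}$, while when $n$ is odd one finds $T=\tfrac{n(n+1)}{2}$ and hence average $\tfrac{n+1}{2}$, and both equal $\lfloor\frac{n+2}{2}\rfloor$. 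I expect this parity case analysis — verifying that the two expressions unify under the floor — to be the only delicate point; a convenient consistency check is Remark \ref{global}, since the orbit average must agree with the known global average of the statistic over $S_n$.
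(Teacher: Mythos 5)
Your proof is correct and takes essentially the same route as the paper: both reduce the orbit total to a count of pairs in $[n]\times[n]$ via Lemma \ref{pair_lemma}, split the count by the parity of the position, and finish with a case analysis on the parity of $n$ to unify the two averages under $\lfloor\frac{n+2}{2}\rfloor$. The only cosmetic difference is that the paper counts strict even deficiencies and odd excedances and then adds the (already known to be $1$-mesic) fixed-point count, whereas you count the weak versions directly; your totals $T=\frac{n(n+2)}{2}$ for $n$ even and $T=\frac{n(n+1)}{2}$ for $n$ odd are correct.
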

\begin{proof}
We count the even standard deficiencies and odd standard excedances and then add fixed points. By now the relevant line of reasoning is familiar: the number of odd excedances in the orbit of $\pi$ is the size of the set $\{(i,j)\in\zeta_\pi\mid j>i, i \text{ odd}\}$, which is the same set as $\{(i,j)\in[n]\times[n]\mid j>i, i \text{ odd}\}$, whose size is $\sum_{\text{odd }i\in[n]}(n-i)$. In the same way, the number of even deficiencies in the orbit of $\pi$ is $\sum_{\text{even }j\in[n]}(j-1)$. We compute the orbit-average of the number of even deficiencies and odd excedances when $n$ is odd as
\[
\frac{1}{n}\Bigl(\sum_{\text{odd }i\in[n]}(n-i) + \sum_{\text{even }j\in[n]}(j-1)\Bigr)= \frac{1}{n}\Bigl((n-1)+(n-3)+\dots+2)+(1+3+\dots+(n-2)\Bigr)=\frac{1}{n}\frac{n(n-1)}{2}=\frac{n-1}{2}
\]
and when $n$ is even as
\begin{gather*}
\frac{1}{n}\Bigl(\sum_{\text{odd }i\in[n]}(n-i) + \sum_{\text{even }j\in[n]}(j-1)\Bigr)=\frac{1}{n}\Bigl(((n-1)+(n-3)+\dots+1)+(1+3+\dots+(n-1))\Bigr)= \\
=\frac{1}{n}\Bigl(((n-1)+(n-3)+\dots+1)+(0+2+\dots+(n-2))+\frac{n}{2}\Bigr)=\frac{1}{n}\Bigl(\frac{n(n-1)}{2}+\frac{n}{2}\Bigr)=\frac{1}{n}\frac{n^2}{2}=\frac{n}{2}
\end{gather*}
Adding the $1$ average fixed point, we have that this statistic has orbit-average value $\frac{n+1}{2}$ when $n$ is odd and $\frac{n+2}{2}$ when $n$ is even. In both cases, this is equal to $\lfloor \frac{n+2}{2} \rfloor$.
\end{proof}

\subsection{Statistics related to inversions} This section contains 5 new instances of homomesy with more subtly related statistics.
\\

An \textit{inversion} in a permutation $\pi$ is a pair $i<j$ such that $\pi(j)<\pi(i)$. Two important statistics derived from the concept of an inversion are the \textbf{inversion number}, which is the number of inversions in a permutation, and the \textbf{inversion sum}, which is the sum $\sum(j-i)$ for all inversions $i<j$ in a permutation $\pi$. A \textit{non-inversion} in a permutation $\pi$ is a pair $i<j$ such that $\pi(i)<\pi(j)$, and the \textbf{non-inversion sum} is defined as expected: $\sum(j-i)$ for all non-inversions $i<j$.
\\

It is natural to think of the value $|\pi(i)-i|$ as the \textit{displacement} of an index $i$ under $\pi$. We then define the \textbf{total displacement} statistic of a permutation $\pi$ as the sum of the displacements of all indices, $\sum_{i\in [n]}|\pi(i)-i|$. If we limit the indices over which we sum only to those which are excedances, we get a new statistic called \textbf{depth}, which has equivalent formulations $\sum_{\pi(i)>i}(\pi(i)-i)=\#\{i\leq j:\pi(i)>j\}$ and whose value is half the total displacement. The \textbf{Spearman's rho} of a permutation and the identity permutation is defined very similarly: $\sum_i(\pi(i)-i)^2$.
\begin{thm}\label{sec_3_thm}
The following statistics from the Findstat database are homomesic for permutations with rotation:
\end{thm}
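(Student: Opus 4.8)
The plan is to split the listed statistics into two kinds and handle them by different methods. Total displacement, depth, and Spearman's rho are all \emph{positionwise}: each is of the form $\sum_{i\in[n]} f(i,\pi(i))$ for a fixed function of the single pair $(i,\pi(i))$, namely $f(i,j)=|j-i|$, $f(i,j)=\max(j-i,0)$, and $f(i,j)=(j-i)^2$ respectively. Inversion sum and non-inversion sum, by contrast, depend on \emph{pairs} of positions, so the set $\zeta$ does not apply to them directly; this is the step I expect to be the crux.

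For the positionwise statistics I would invoke the $\zeta$ machinery. The orbit sum of $\sum_i f(i,\pi(i))$ is $\sum_{(i,j)\in\zeta_\pi} f(i,j)$, and since $\zeta_\pi=[n]\times[n]$ for every orbit by Lemma~\ref{pair_lemma}, this equals the orbit-independent quantity $\sum_{(i,j)\in[n]\times[n]} f(i,j)$; dividing by the orbit size $n$ gives a constant average. The remaining (routine) sums produce orbit averages $\frac{n^2-1}{3}$ for total displacement, $\frac{n^2-1}{6}$ for depth, and $\binom{n+1}{3}$ for Spearman's rho. Alternatively, one gets depth as half of total displacement from the identity noted above together with Lemma~\ref{lin_com}.

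The main obstacle is the inversion sum $\sum_{i<j,\ \pi(i)>\pi(j)}(j-i)$. Here I would exchange the order of summation over the orbit $\mathcal{O}$:
\[
\sum_{\sigma\in\mathcal{O}}\ \sum_{\substack{i<j\\ \sigma(i)>\sigma(j)}}(j-i)
=\sum_{i<j}(j-i)\,\#\{\sigma\in\mathcal{O}:\sigma(i)>\sigma(j)\}.
\]
The key observation is that for $\sigma=rot^m(\pi)$ the formula $rot^m(\pi)(i)=\pi(i+m\bmod n)$ gives $(\sigma(i),\sigma(j))=(\pi(i+m),\pi(j+m))$ with indices read cyclically, so as $m$ runs over the orbit the count depends only on $d=j-i$ and equals $N_d(\pi):=\#\{k\in[n]:\pi(k)>\pi(k+d\bmod n)\}$. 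Grouping the $n-d$ pairs at each distance $d$ turns the orbit sum into $\sum_{d=1}^{n-1} d(n-d)\,N_d(\pi)$.

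Finally I would exploit a symmetry to eliminate the $\pi$-dependence. The coefficient $d(n-d)$ is invariant under $d\mapsto n-d$, while each cyclic pair at distance $d$ is compared once in each direction, giving $N_d(\pi)+N_{n-d}(\pi)=n$. Reindexing and adding then yields
\[
\sum_{d=1}^{n-1} d(n-d)\,N_d(\pi)=\frac{n}{2}\sum_{d=1}^{n-1} d(n-d)=\frac{n}{2}\binom{n+1}{3},
\]
which is independent of $\pi$, so inversion sum is homomesic with orbit average $\tfrac12\binom{n+1}{3}$. Non-inversion sum follows immediately, since inversion sum plus non-inversion sum equals the constant $\sum_{i<j}(j-i)=\binom{n+1}{3}$, so Lemma~\ref{lin_com} applies. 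It is worth flagging that this symmetry is precisely what separates inversion sum from inversion \emph{number}, whose orbit sum is $\sum_{d}(n-d)N_d(\pi)$ with the non-symmetric coefficient $n-d$; there the $\pi$-dependence does not cancel, and indeed inversion number is not homomesic for rotation.
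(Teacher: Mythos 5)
Your proposal is correct, and on the two crux statistics it takes a genuinely different route from the paper. For depth, total displacement, and Spearman's rho you use the same machinery the paper does: the Pair Lemma reduces the orbit sum of any statistic of the form $\sum_i f(i,\pi(i))$ to a sum over $[n]\times[n]$ (the paper phrases the depth computation in terms of $k$-excedance counts and cites \cite[Theorem 5.7]{ELMSW22} for the Spearman's rho evaluation, but it is the same argument). The divergence is at inversion sum and non-inversion sum. The paper proves the inversion sum homomesy by importing from \cite{spearman_inv} the identity that Spearman's rho is twice the inversion sum and then applying Lemma \ref{lin_com}; you instead prove it directly, decomposing the orbit sum by cyclic distance as $\sum_{d=1}^{n-1}d(n-d)N_d(\pi)$ and killing the $\pi$-dependence with the symmetry $N_d(\pi)+N_{n-d}(\pi)=n$, which is valid (the reindexing $d\mapsto n-d$ works for all parities of $n$). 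For non-inversion sum the paper applies the reversal map $\mathcal{R}$, using that reversing every permutation in an orbit yields another complete orbit and that $nis(\pi)=is(\mathcal{R}(\pi))$; you use the simpler observation that $is+nis$ is the constant $\binom{n+1}{3}$, so Lemma \ref{lin_com} applies. Your route is self-contained, needing no external identity, and it has a structural payoff the paper's does not: it makes visible why inversion \emph{sum} is homomesic while inversion \emph{number} is not (the coefficient $d(n-d)$ is symmetric under $d\mapsto n-d$, the coefficient $n-d$ is not), whereas the paper only records a counterexample for inversion number at $n=3$. One small remark: your averages $\frac{n^2-1}{6}$ for depth and $\frac{n^2-1}{3}$ for total displacement agree with the values $\frac{1}{n}\binom{n+1}{3}$ and $\frac{2}{n}\binom{n+1}{3}$ derived in the paper's proofs; the values $\frac{1}{n}\binom{n+2}{3}$ and $\frac{2}{n}\binom{n+2}{3}$ appearing in the statement of the theorem look like typos.
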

\begin{itemize}
    \item Stat 29: \textit{the depth of a permutation} (average: $\frac{1}{n}\binom{n+2}{3}$)
    \item Stat 55: \textit{the inversion sum of a permutation} (average: $\frac{1}{2}\binom{n+1}{3}$)
    \item Stat 341: \textit{the non-inversion sum of a permutation} (average: $\frac{1}{2}\binom{n+1}{3}$)
    \item Stat 828: \textit{the Spearman's rho of a permutation and the identity permutation} (average: $\binom{n+1}{3}$)
    \item Stat 830: \textit{the total displacement of a permutation} (average: $\frac{2}{n}\binom{n+2}{3}$)
\end{itemize}

We begin with depth and total displacement.

\begin{prop}\label{depth} (Statistics 29, 830)
The depth statistic is $(\frac{1}{n}\binom{n+2}{3})$-mesic, and the total displacement statistic $(\frac{2}{n}\binom{n+2}{3})$-mesic, for permutations with rotation.
\end{prop}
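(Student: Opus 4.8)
The plan is to exploit the fact, established via Lemma~\ref{pair_lemma}, that over any orbit the multiset of preimage–image pairs is exactly $\zeta = [n]\times[n]$, independent of which orbit we chose. Both statistics here are \emph{position-local}: total displacement is $\sum_{i\in[n]}|\pi(i)-i|$ and depth is $\sum_{\pi(i)>i}(\pi(i)-i)=\sum_{i\in[n]}(\pi(i)-i)^{+}$, each a sum over positions $i$ of a function of the single pair $(i,\pi(i))$. Consequently the sum of either statistic over an orbit is obtained by summing that function over all pairs of $\zeta$, and since $\zeta=[n]\times[n]$ is the same for every orbit, this immediately yields homomesy; only the value of the constant remains to be computed.

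First I would reduce the two claims to one. Since $\sum_{i\in[n]}(\pi(i)-i)=0$ for every $\pi$ (both $\sum_i \pi(i)$ and $\sum_i i$ equal $\binom{n+1}{2}$), the excedance contributions and the deficiency contributions to the displacement cancel in sign and hence agree in magnitude, so $\sum_{\pi(i)>i}(\pi(i)-i)=\tfrac12\sum_i|\pi(i)-i|$; that is, depth is pointwise exactly half of total displacement. By Lemma~\ref{lin_com} it therefore suffices to prove the claim for one of them, and I would treat total displacement, whose orbit sum is the clean symmetric expression $\sum_{(i,j)\in[n]\times[n]}|i-j|$.

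The remaining step is to evaluate this double sum. Grouping the pairs by their common difference $d=|i-j|$, there are $n-d$ pairs with difference $d$ for each $d\in\{1,\dots,n-1\}$, so the orbit sum equals $2\sum_{d=1}^{n-1}d(n-d)$. This is a routine computation: the hockey-stick identity (or direct use of the formulas for $\sum d$ and $\sum d^2$) collapses it to a single binomial coefficient, and dividing by the common orbit size $n$ (Lemma~\ref{orb_struc}) gives the stated average for total displacement, with the depth average following as half of it.

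I expect no deep obstacle here; the work is essentially bookkeeping once the machinery of $\zeta$ is in place. The one point requiring care is the very first reduction—recognizing that total displacement and depth are position-local, so that the identity $\zeta=[n]\times[n]$ applies directly, rather than attempting to track displacements through successive rotations. In particular one should resist the alternative threshold formulation $\#\{i\le j:\pi(i)>j\}$ of depth, which mixes positions with value-thresholds and does not fit the pairwise $\zeta$ framework; the additive form $\sum_i(\pi(i)-i)^{+}$ is exactly what makes the argument immediate.
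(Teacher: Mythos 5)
Your method is sound and is essentially the paper's argument run in mirror order. The paper computes the orbit sum of depth directly, grouping excedances by their size $k$ (there are $n-k$ $k$-excedances per orbit, a consequence of Lemma \ref{pair_lemma}), obtaining $\sum_{k}k(n-k)$, and then doubles to get total displacement, citing the literature for the identity that displacement is twice depth. You instead compute the displacement sum $\sum_{(i,j)\in[n]\times[n]}|i-j|=2\sum_{d=1}^{n-1}d(n-d)$ over $\zeta$ and halve it for depth, proving the factor-of-two identity yourself via $\sum_{i}(\pi(i)-i)=0$. These are the same computation; your self-contained one-line proof of the relation $\mathrm{depth}=\tfrac{1}{2}\,\mathrm{displacement}$ is a small improvement over the paper's citation, and your grouping of $[n]\times[n]$ by common difference is exactly the paper's $k$-excedance count in different clothing.

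The one real flaw is that you asserted the collapsed sum ``gives the stated average'' without evaluating it: it does not. We have $\sum_{d=1}^{n-1}d(n-d)=\binom{n+1}{3}$, so your argument yields orbit averages $\frac{1}{n}\binom{n+1}{3}$ for depth and $\frac{2}{n}\binom{n+1}{3}$ for total displacement, whereas the proposition as stated has $\binom{n+2}{3}$ in both places. The discrepancy is in fact a typo in the paper's statement, not an error in your computation: the paper's own proof also concludes that depth is $\bigl(\frac{1}{n}\binom{n+1}{3}\bigr)$-mesic, and a check at $n=3$ confirms this, since the two orbits $\{123,231,312\}$ and $\{213,132,321\}$ each have total depth $4$, giving average $\frac{4}{3}=\frac{1}{3}\binom{4}{3}$, while $\frac{1}{3}\binom{5}{3}=\frac{10}{3}$. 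So your proof is correct once the final evaluation is actually carried out, but by waving at the closed form rather than computing it you missed that your (correct) answer contradicts the constant you were asked to verify; that comparison is precisely the step that catches errors like this one.
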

\begin{proof}
Using the first formulation of the depth statistic given above, we observe that the orbit-average value of depth in $\pi$'s orbit is
\[
\frac{1}{n}\sum_{\sigma\in\mathcal{O}_\pi}\sum_{i<\sigma(i)}(\sigma(i)-i)
\]
We have seen that there are exactly $n-k$ $k$-excedances in each orbit generated by $rot$ on $S_n$. Noting further that any excedance $\pi(i)>i$ is a $k$-excedance for some $k\in[n]$, it follows that for any orbit
\[
\frac{1}{n}\sum_{\sigma\in\mathcal{O}_\pi}\sum_{\sigma(i)>i}(\sigma(i)-i)=\frac{1}{n}\sum_{k\in[n]}(i+k-i)(n-k)=\frac{1}{n}\sum_{k\in[n]}k(n-k).
\]
This is $\frac{1}{n}$ times the $(n-1)$-st tetrahedral number, which (analogously to the triangular numbers) counts the spheres in a tetrahedron with edges of $(n-1)$ spheres and has value $\binom{(n-1)+2}{3}=\binom{n+1}{3}$ \cite[A000292]{OEIS}. Hence depth is $(\frac{1}{n}\binom{n+1}{3})$-mesic for permutations with rotation. 

A permutation's total displacement is twice its depth; see \cite{depth}.
Hence total displacement is $(\frac{2}{n}\binom{n+1}{3})$-mesic for permutations with rotation.
\end{proof}

Recall for present use Remark \ref{global}. From this we know that, if $(S_n,mp_1,st)$ and $(S_n,mp_2,st)$ are both homomesic, then their orbits must have the same average $st$-value.
 
\begin{prop}
(Statistic 828) The Spearman's rho of a permutation and the identity permutation is $\binom{n+1}{3}$-mesic for permutations with rotation.
\end{prop}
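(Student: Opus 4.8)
The plan is to follow the same $\zeta$-based template used throughout this section. The orbit-average of Spearman's rho over $\mathcal{O}_\pi$ is
\[
\frac{1}{n}\sum_{\sigma\in\mathcal{O}_\pi}\sum_{i\in[n]}(\sigma(i)-i)^2,
\]
and since the inner double sum ranges over exactly the preimage-image pairs $(i,\sigma(i))$ occurring in the orbit, Lemma \ref{pair_lemma} lets me replace it by a sum over $\zeta=[n]\times[n]$. Thus the total orbit sum equals $\sum_{(i,j)\in[n]\times[n]}(j-i)^2$, a quantity manifestly independent of $\pi$. This simultaneously establishes that Spearman's rho is homomesic for rotation and reduces the problem to evaluating a single constant.

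To pin down that constant I would expand $(j-i)^2=i^2-2ij+j^2$ and sum over $[n]\times[n]$. By symmetry $\sum_{(i,j)}i^2=\sum_{(i,j)}j^2=n\sum_{k\in[n]}k^2$, while the cross term factors as $\sum_{(i,j)}ij=\bigl(\sum_{k\in[n]}k\bigr)^2$. Dividing by $n$, the orbit-average becomes $2\sum_{k\in[n]}k^2-\tfrac{2}{n}\bigl(\sum_{k\in[n]}k\bigr)^2$, a combination of the standard power sums $\sum k$ and $\sum k^2$. (Equivalently one may group the pairs by their common difference $d=j-i$, of which there are $n-|d|$, obtaining $\tfrac{2}{n}\sum_{d=1}^{n-1}d^2(n-d)$.)

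Alternatively, once homomesy is in hand, I can determine the constant through Remark \ref{global}: the orbit-average must coincide with the global average of Spearman's rho over all of $S_n$. Since $\pi(i)$ is uniformly distributed on $[n]$ for each fixed $i$ under the uniform measure on $S_n$, this global average is $\sum_{i\in[n]}\mathbb{E}\bigl[(\pi(i)-i)^2\bigr]=\tfrac{1}{n}\sum_{i\in[n]}\sum_{j\in[n]}(j-i)^2$, which is precisely the same sum. Either route arrives at the identical quantity, so they serve as a mutual check.

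The only genuine obstacle is the closing arithmetic: verifying that $\tfrac{1}{n}\sum_{(i,j)\in[n]\times[n]}(j-i)^2$ collapses to $\binom{n+1}{3}=\tfrac{(n-1)n(n+1)}{6}$. This is a routine substitution of the formulas for $\sum k$ and $\sum k^2$ (together with $\sum k^3$ if one prefers the difference-grouping form), and I would confirm the final factorization against the stated value $\binom{n+1}{3}$ to make sure the power-sum bookkeeping is correct.
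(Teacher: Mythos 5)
Your proof is correct and takes essentially the same approach as the paper: both use Lemma \ref{pair_lemma} to replace the orbit sum by $\frac{1}{n}\sum_{(i,j)\in[n]\times[n]}(j-i)^2$, which immediately yields homomesy. The only difference is that the paper cites \cite[Theorem 5.7]{ELMSW22} for the closed form, whereas you evaluate the sum directly --- and your arithmetic checks out, since $2\sum_{k\in[n]}k^2-\tfrac{2}{n}\bigl(\sum_{k\in[n]}k\bigr)^2=\tfrac{(n-1)n(n+1)}{6}=\binom{n+1}{3}$.
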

\begin{proof}
We compute the orbit-average value of this statistic for the orbit of $\pi$ as
\[
\frac{1}{n}\sum_{(i,j)\in\zeta_\pi}(j-i)^2
\]
which, by Lemma \ref{pair_lemma}, is the same as
\[
\frac{1}{n}\sum_{(i,j)\in[n]\times [n]}(j-i)^2.
\]
This depends solely on $n$, so all that remains is to find a closed formula for this value. By \cite[Theorem 5.7]{ELMSW22}) we conclude that the Spearman's rho of a permutation and the identity permutation is $\binom{n+1}{3}$-mesic for permutations with rotation.
\end{proof}

The values of Spearman's rho and inversion sum are related just as total displacement and depth, so that we can proceed easily to the final two statistics in this section.

\begin{prop} (Statistics 55, 341)
    Inversion sum and non-inversion sum are both $\frac{1}{2}\binom{n+1}{3}$-mesic for permutations with rotation.
\end{prop}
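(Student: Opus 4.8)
The plan is to reduce both statistics to the Spearman's rho result just proved, using Lemma \ref{lin_com}. Write $\rho(\pi)=\sum_i(\pi(i)-i)^2$ for Spearman's rho, and let $\mathrm{inv}(\pi)$ and $\mathrm{ninv}(\pi)$ denote the inversion sum and the non-inversion sum. The whole proposition will follow from two facts. The first is the pointwise identity
\[
\rho(\pi) = 2\,\mathrm{inv}(\pi)\qquad\text{for every }\pi\in S_n,
\]
which is the exact analogue of the relation ``total displacement $=2\cdot$ depth'' exploited in Proposition \ref{depth}. The second is the elementary observation that
\[
\mathrm{inv}(\pi)+\mathrm{ninv}(\pi)=\sum_{1\le i<j\le n}(j-i)=\sum_{d=1}^{n-1}d(n-d)=\binom{n+1}{3},
\]
a constant independent of $\pi$, where I reuse the tetrahedral-number evaluation from Proposition \ref{depth}.

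Granting these, the conclusion is immediate. Since $\rho$ is $\binom{n+1}{3}$-mesic, the first identity together with Lemma \ref{lin_com} (scaling by $\tfrac12$) gives that $\mathrm{inv}=\tfrac12\rho$ is $\tfrac12\binom{n+1}{3}$-mesic. For the non-inversion sum, a constant function is trivially homomesic with orbit-average equal to that constant, so the second fact exhibits $\mathrm{ninv}=\binom{n+1}{3}-\mathrm{inv}$ as a linear combination of homomesic statistics; by Lemma \ref{lin_com} it too is homomesic, with orbit-average $\binom{n+1}{3}-\tfrac12\binom{n+1}{3}=\tfrac12\binom{n+1}{3}$.

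The real content, and the step I expect to be the main obstacle, is the pointwise identity $\rho(\pi)=2\,\mathrm{inv}(\pi)$. I would prove it by induction on the number of inversions (the Coxeter length), the base case $\pi=\mathrm{id}$ reading $0=0$. For the inductive step I would pass from $\pi$ to $\pi\cdot s_k$, choosing the adjacent transposition $s_k$ of positions $k,k+1$ so that $\pi(k)<\pi(k+1)$, hence so that the swap creates exactly one new inversion. A one-line expansion shows the left side changes by $\Delta\rho=2(\pi(k+1)-\pi(k))$. The delicate part is the change in $\mathrm{inv}$: the swapped pair $(k,k+1)$ turns from a non-inversion into an inversion, contributing $+1$; meanwhile each outside position $l\notin\{k,k+1\}$ lies in the two pairs $\{l,k\}$ and $\{l,k+1\}$, whose weights differ by $1$, and a short cancellation shows the net change from position $l$ equals the indicator $[\,\pi(k)<\pi(l)<\pi(k+1)\,]$. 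Summing over all such $l$ therefore counts the values lying strictly between $\pi(k)$ and $\pi(k+1)$ — each of which occurs at some position other than $k$ or $k+1$, hence is counted once — giving $\pi(k+1)-\pi(k)-1$. Thus $\Delta\,\mathrm{inv}=(\pi(k+1)-\pi(k)-1)+1=\pi(k+1)-\pi(k)=\tfrac12\Delta\rho$, closing the induction. Getting this straddling-pair bookkeeping exactly right is where the care is required; alternatively, since $\sum_i\pi(i)^2=\sum_i i^2$ gives $\rho(\pi)=2\sum_i i^2-2\sum_i i\,\pi(i)$, the identity is equivalent to $\mathrm{inv}(\pi)=\sum_i i^2-\sum_i i\,\pi(i)$, which one may prefer to establish by a direct count of the inversions crossing each position-cut.
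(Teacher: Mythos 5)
Your proof is correct, but it diverges from the paper's in both halves. For the inversion sum, the paper simply cites \cite{spearman_inv} for the identity that Spearman's rho is twice the inversion sum and then, exactly as you do, invokes Lemma \ref{lin_com}; you instead prove that identity from scratch by induction on the Coxeter length, and your bookkeeping of the straddling values (the net contribution of each outside position $l$ being the indicator $[\pi(k)<\pi(l)<\pi(k+1)]$, so that $\Delta\,\mathrm{inv}=\pi(k+1)-\pi(k)=\tfrac12\Delta\rho$) checks out. For the non-inversion sum, the two arguments are genuinely different: the paper uses a symmetry of the dynamical system, namely that applying the reversal map $\mathcal{R}$ to all permutations of one $rot$-orbit yields another complete orbit while exchanging inversions and non-inversions of equal size, so the $nis$-average over any orbit equals the $is$-average over some orbit; you instead use the pointwise identity $\mathrm{inv}(\pi)+\mathrm{ninv}(\pi)=\sum_{i<j}(j-i)=\binom{n+1}{3}$ and Lemma \ref{lin_com} applied to a constant statistic. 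Your route is more self-contained: it needs no external citation and avoids the paper's unproved (though easily verified, via $\mathcal{R}\circ rot=rot^{-1}\circ\mathcal{R}$) claim that reversal permutes orbits. The paper's route is shorter given the citation, and its reversal argument exposes an orbit-level symmetry that works without knowing any pointwise relation between the two statistics, which is of some independent interest.
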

\begin{proof}
    In \cite{spearman_inv} it is proven that Spearman's rho is twice inversion sum. With Lemma \ref{lin_com}, this implies the result for inversion sum. For non-inversion sum, note that, given one orbit generated by $rot$ on $S_n$, we get another complete orbit by applying the reversal map $\mathcal{R}$ to all the permutations it contains. Denoting by $is$ and $nis$ the inversion sum and non-inversion sum statistics, we have
    \[
    \frac{1}{n}\sum_{\sigma\in\mathcal{O}_\pi}nis(\sigma)=\frac{1}{n}\sum_{\sigma\in\mathcal{O}_{\mathcal{R}(\pi)}}is(\sigma)=\frac{1}{n}\sum_{\sigma\in\mathcal{O}_\pi}is(\sigma)=\frac{1}{2}\binom{n+1}{3}.
    \]
The first equality follows from the fact that $nis(\pi)=is(\mathcal{R}(\pi))$ --- for reversal swaps inversions and non-inversions while preserving their size --- and the note above that reversing the permutations in one orbit gives another complete orbit; the other two follow from the fact that inversion sum is $\frac{1}{2}\binom{n+1}{3}$-mesic for permutations with rotation. This concludes the proof.
\end{proof}

It is interesting that the inversion sum statistic is homomesic for permutations with rotation, while the inversion number statistic is not. For this there is already a counterexample when $n=3$: the two orbits have differing average values of $\frac{4}{3}$ and $\frac{5}{3}$.
\subsection{Other statistics} While the 28 homomesic statistics in the three previous sections have related to such well-studied concepts as excedances and inversions, the remaining six are less familiar.

\begin{thm}\label{sec_4_thm}
The following statistics from the Findstat database are homomesic for permutations with rotation:
\begin{itemize}
    \item Stat 342: \textit{the cosine of a permutation} (average: $\frac{(n+1)^2n}{4}$)
    \item Stat 1285: \textit{the number of primes in the column sums of the two line notation of a permutation} 
    \item Stat 1287: \textit{the number of primes obtained by multiplying preimage and image of a permutation and subtracting one} 
    \item Stat 1288: \textit{the number of primes obtained by multiplying preimage and image of a permutation and adding one} 
    \item Stat 1293: \textit{the sum of all $\frac{1}{i+\pi(i)}$ for a permutation $\pi$ times the lcm $\ell$ of all possible values of $i+\pi(i)$ among permutations of the same length} (average: $\frac{\ell}{n}\Bigl((2n+1)H_{2n}-(2n+2)H_n\Bigr)$, where $H_m$ is the $m$-th harmonic number)
    \item Stat 1801: \textit{half the number of preimage-image pairs of different parity in a permutation} (average: $(\frac{1}{n}\lfloor \frac{n}{2} \rfloor\lceil\frac{n}{2}\rceil)$) 
\end{itemize}
\end{thm}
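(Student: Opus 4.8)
The unifying observation is that every one of these six statistics is, up to a global rational factor, a sum over the indices $i\in[n]$ of a function depending only on the preimage-image pair $(i,\pi(i))$. That is, each has the shape $st(\pi)=\lambda\sum_{i\in[n]}f(i,\pi(i))$ for a fixed $\lambda\in\mathbb{Q}$ and a fixed $f:[n]\times[n]\to\mathbb{Q}$. For any such statistic the orbit sum collapses immediately: summing over $\mathcal{O}_\pi$ and re-indexing by preimage-image pairs gives $\lambda\sum_{(i,j)\in\zeta_\pi}f(i,j)=\lambda\sum_{(i,j)\in[n]\times[n]}f(i,j)$ by Lemma \ref{pair_lemma}, a quantity depending on $n$ alone. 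Dividing by the common orbit size $n$ (Lemma \ref{orb_struc}) yields an average independent of the orbit, so homomesy holds for all six at once. The plan is therefore to identify $f$ and $\lambda$ in each case and then evaluate $\frac{\lambda}{n}\sum_{(i,j)\in[n]\times[n]}f(i,j)$.

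Four of the cases need only this identification together with an easy count. For the cosine (Stat 342) one has $f(i,j)=ij$ and $\lambda=1$, so the average factors as $\frac1n\bigl(\sum_{i}i\bigr)\bigl(\sum_j j\bigr)=\frac1n\bigl(\tfrac{n(n+1)}2\bigr)^2=\frac{n(n+1)^2}4$, matching the stated value. For Stats 1285, 1287, and 1288 the function $f(i,j)$ is the indicator that $i+j$, $ij-1$, and $ij+1$, respectively, is prime; homomesy is immediate, and the orbit average is simply $\frac1n$ times the number of pairs in $[n]\times[n]$ meeting the relevant primality condition, with no simpler closed form to be expected (which is why the statement records no average for these three). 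For Stat 1801, $f(i,j)=\tfrac12[\,i\not\equiv j \bmod 2\,]$; counting the pairs of opposite parity gives $\lfloor n/2\rfloor\lceil n/2\rceil$ such pairs after halving, so the average is $\frac1n\lfloor n/2\rfloor\lceil n/2\rceil$ as claimed.

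The one genuinely computational case is Stat 1293, which is also where I expect the only real effort. First I would check that $\ell$, the lcm of all attainable values of $i+\pi(i)$, is exactly $\operatorname{lcm}(2,3,\dots,2n)$, since $i+\pi(i)$ ranges over $\{2,\dots,2n\}$; this guarantees that $f(i,j)=\ell/(i+j)$ is integer-valued, so the statistic really is an honest preimage-image-pair sum and homomesy follows as above with average $\frac{\ell}{n}\sum_{(i,j)\in[n]\times[n]}\frac1{i+j}$. The remaining task is to evaluate $\sum_{(i,j)\in[n]\times[n]}\frac1{i+j}$ in closed form. I would group the pairs by antidiagonal $s=i+j$, using that the number of pairs with $i+j=s$ is $s-1$ for $2\le s\le n+1$ and $2n+1-s$ for $n+1\le s\le 2n$, and then rewrite the resulting two sums in terms of harmonic numbers, with the partial sums collapsing to $(2n+1)H_{2n}-(2n+2)H_n$. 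This harmonic-number bookkeeping --- splitting at $s=n+1$, absorbing the stray $\frac1{n+1}$ term via $H_{n+1}=H_n+\frac1{n+1}$, and collecting coefficients --- is the main obstacle; once it is done, multiplying by $\frac{\ell}{n}$ gives the stated average.
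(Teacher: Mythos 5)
Your proposal is correct and follows essentially the same route as the paper: each orbit average is rewritten via Lemma \ref{pair_lemma} as a sum over $[n]\times[n]$ independent of the orbit, and for Stat 1293 your antidiagonal count of pairs with $i+j=s$ (namely $s-1$ versus $2n+1-s$) is exactly the paper's Lemma \ref{multiset}, followed by the same split at $s=n+1$ and harmonic-number simplification to $(2n+1)H_{2n}-(2n+2)H_n$. The only cosmetic difference is Stat 342, where you evaluate $\sum_{(i,j)\in[n]\times[n]}ij=\bigl(\sum_i i\bigr)\bigl(\sum_j j\bigr)$ directly rather than citing an external reference as the paper does, which makes your argument slightly more self-contained.
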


\begin{prop} (Statistic 342)
    The cosine of a permutation $\pi$, defined as $\sum_ii\pi(i)$, is $\frac{(n+1)^2n}{4}$-mesic for permutations with rotation.
\end{prop}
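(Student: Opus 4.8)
The plan is to apply the same $\zeta$-based technique used throughout this section, exploiting the fact that the cosine statistic is a sum over the preimage-image pairs of a permutation. First I would write the orbit-average of the cosine statistic for the orbit of $\pi$ as
\[
\frac{1}{n}\sum_{\sigma\in\mathcal{O}_\pi}\sum_{i\in[n]} i\,\sigma(i) = \frac{1}{n}\sum_{(i,j)\in\zeta_\pi} ij,
\]
where the reindexing is valid because each summand $i\,\sigma(i)$ is precisely the product of the two coordinates of the pair $(i,\sigma(i))$, and ranging over all $\sigma$ in the orbit and all positions $i$ sweeps out exactly the pairs recorded in $\zeta_\pi$.

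Next I would invoke Lemma \ref{pair_lemma}, which gives $\zeta_\pi = [n]\times[n]$ for every orbit, so that the orbit-average becomes
\[
\frac{1}{n}\sum_{(i,j)\in[n]\times[n]} ij.
\]
The crucial structural feature is that the summand $ij$ \emph{separates} as a product of a function of $i$ and a function of $j$. This lets the double sum factor as
\[
\frac{1}{n}\Bigl(\sum_{i=1}^n i\Bigr)\Bigl(\sum_{j=1}^n j\Bigr) = \frac{1}{n}\left(\frac{n(n+1)}{2}\right)^2 = \frac{(n+1)^2 n}{4}.
\]

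Since this value depends only on $n$ and not on the particular orbit, the cosine statistic is $\frac{(n+1)^2n}{4}$-mesic for permutations with rotation, as claimed. I do not expect any genuine obstacle here: the entire argument rests on the observation that the cosine statistic is separable in $i$ and $\pi(i)$, which is exactly the hypothesis that makes the passage to $\zeta = [n]\times[n]$ collapse into a product of two elementary arithmetic series. The only care needed is to confirm that summing the statistic over the whole orbit genuinely enumerates each pair of $[n]\times[n]$ once, which is immediate from Lemma \ref{pair_lemma}.
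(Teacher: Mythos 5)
Your proof is correct and follows essentially the same route as the paper: express the orbit-average as $\frac{1}{n}\sum_{(i,j)\in\zeta_\pi} ij$ and apply Lemma \ref{pair_lemma} to replace $\zeta_\pi$ by $[n]\times[n]$, making the value independent of the orbit. The only difference is cosmetic but welcome: where the paper cites \cite[Proposition 5.31]{ELMSW22} for the closed form, you evaluate the sum directly by factoring it as $\frac{1}{n}\bigl(\sum_{i=1}^n i\bigr)\bigl(\sum_{j=1}^n j\bigr)=\frac{(n+1)^2n}{4}$, which makes the argument self-contained.
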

\begin{proof}
    We compute the average of this statistic over $\pi$'s orbit and apply Lemma \ref{pair_lemma}, giving an expression independent of $\pi$:
    \[
 \frac{1}{n} \sum_{(i,j)\in \zeta_{\pi}} ij=    \frac{1}{n} \sum_{(i,j)\in [n]\times[n]}ij.
    \]
 This means that the cosine of a permutation is homomesic with respect to rotation. From \cite[Proposition 5.31]{ELMSW22} we have that the orbit-average is $\frac{(n+1)^2n}{4}$.
\end{proof}
We prove first a lemma that will be used in the subsequent proofs.
\begin{lem} \label{multiset}
    The multiset $\{\{i+j\mid(i,j)\in[n]\times[n]\}\}$ contains each element $k\in\{2,3,\dots,2n\}$ with multiplicity $\min\{k-1,2n-k+1\}$.
\end{lem}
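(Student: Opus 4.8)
The plan is to count, for each target value $k \in \{2, 3, \ldots, 2n\}$, the number of ordered pairs $(i,j) \in [n] \times [n]$ with $i + j = k$. Since every such pair contributes exactly one copy of $k$ to the multiset $\{\{i+j \mid (i,j) \in [n]\times[n]\}\}$, this count is precisely the multiplicity we wish to determine.

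First I would observe that a pair summing to $k$ is completely determined by its first coordinate, since the second is forced to be $j = k - i$. Thus it suffices to count the admissible values of $i$, namely those for which both $i \in [n]$ and $k - i \in [n]$. The first membership gives $1 \le i \le n$, and the second gives $k - n \le i \le k - 1$, so $i$ ranges over the integer interval $[\max\{1, k-n\}, \min\{n, k-1\}]$, which contains $\min\{n, k-1\} - \max\{1, k-n\} + 1$ integers.

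Next I would evaluate this expression by dividing into the regimes $k \le n+1$ and $k \ge n+1$. In the first regime, $k - 1 \le n$ and $k - n \le 1$, so the count collapses to $(k-1) - 1 + 1 = k - 1$; in the second, $k - 1 \ge n$ and $k - n \ge 1$, so the count becomes $n - (k - n) + 1 = 2n - k + 1$. The two formulas agree at the shared endpoint $k = n+1$, each yielding $n$, so the piecewise description is consistent there.

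Finally I would repackage both cases as the single quantity $\min\{k-1, 2n-k+1\}$, justified by the observation that $k - 1 \le 2n - k + 1$ holds precisely when $k \le n+1$, so that the smaller of the two quantities is selected in exactly the right regime. The argument is elementary throughout; the only step demanding genuine care is the boundary analysis at $k = n+1$, where one must verify that the two piecewise formulas coincide so that rewriting them as a single $\min$ is unambiguous.
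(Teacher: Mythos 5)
Your proof is correct and is in substance the same as the paper's: both arguments count the pairs $(i,k-i)$ with both coordinates in $[n]$ and split into the cases $k\le n+1$ and $k\ge n+1$, arriving at $k-1$ and $2n-k+1$ respectively. The paper phrases the count as choosing a split point among the $k-1$ plus signs in a sum of $k$ ones (forbidding the first and last $k-n-1$ splits when $k>n+1$), while you solve the inequalities $1\le i\le n$ and $1\le k-i\le n$ directly; this is only a difference of presentation, not of method.
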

\begin{proof}
    The multiplicity of $k\in\{\{i+j\mid(i,j)\in[n]\times[n]\}\}$ counts the number of integer compositions of $k$ into two parts of size at most $n$. For $k\leq n+1$, we may write $k$ as a sum of $k$ $1$'s and split at any one of $k-1$ plus signs. For $k\geq n+1$, we cannot split at the first or last $k-n-1$ plus signs (which would require one part to have more than $n$ $1$'s), leaving us with $k-1-2(k-n-1)=2n-k+1$ options.
\end{proof}

Three of the statistics being considered relate directly to prime numbers, and we take them together here.

\begin{prop} (Statistics 1285, 1287, 1288)
    The number of primes in the column sums of the two line notation of a permutation, the number of primes obtained by multiplying preimage and image of a permutation and subtracting one, and the number of primes obtained by multiplying preimage and image of a permutation and adding one are homomesic for permutations with rotation.
\end{prop}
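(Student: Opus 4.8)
The plan is to dispatch all three statistics at once, since each is simply a count of indices $i$ for which a fixed function of the preimage-image pair $(i,\pi(i))$ takes a prime value. Spelling out the relevant functions, Statistic 1285 equals $\#\{i\in[n]\mid i+\pi(i)\text{ is prime}\}$ (the column sums of the two-line notation are exactly the $i+\pi(i)$), Statistic 1287 equals $\#\{i\in[n]\mid i\pi(i)-1\text{ is prime}\}$, and Statistic 1288 equals $\#\{i\in[n]\mid i\pi(i)+1\text{ is prime}\}$. In every case the summand depends only on the pair $(i,\pi(i))$, which makes these statistics ideal candidates for the $\zeta$-characterization developed earlier.

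First I would sum each statistic over the orbit $\mathcal{O}_\pi$. The orbit-sum merely collects, across all permutations $\sigma$ in the orbit, those indices whose pair $(i,\sigma(i))$ has the prime property, so it equals the number of pairs $(i,j)\in\zeta_\pi$ enjoying that property. By Lemma \ref{pair_lemma} we have $\zeta_\pi=[n]\times[n]$ for every orbit, whence the orbit-sum for Statistic 1285 is $\#\{(i,j)\in[n]\times[n]\mid i+j\text{ is prime}\}$, and analogously for 1287 and 1288 with $ij-1$ and $ij+1$ in place of $i+j$. Each of these quantities depends only on $n$ and is therefore identical across orbits; dividing by the common orbit size $n$ (Lemma \ref{orb_struc}) yields the same average for every orbit, which is precisely homomesy.

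The point most likely to give a reader pause — and the only genuine subtlety here — is that these orbit-averages admit no tidy closed form, which is why the theorem records no average for them. This is immaterial: homomesy asks only that the average be constant over orbits, and that follows from the orbit-independence of $\zeta$ alone, without ever evaluating the prime count. For Statistic 1285 one can say a little more, invoking Lemma \ref{multiset}: the count of pairs with $i+j$ prime rewrites as $\sum_{\substack{k\text{ prime}\\ 2\le k\le 2n}}\min\{k-1,\,2n-k+1\}$, so the orbit-average is $\tfrac1n$ times this sum. For 1287 and 1288 no comparable simplification is available, since the products $ij$ do not distribute as regularly as the sums do, but this does not affect the homomesy claim in the slightest.
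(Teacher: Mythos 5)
Your proof is correct and follows essentially the same route as the paper's: express each statistic's orbit-sum as a count of pairs in $\zeta_\pi$ with the relevant prime property, apply Lemma \ref{pair_lemma} to replace $\zeta_\pi$ by $[n]\times[n]$, and divide by the orbit size $n$ to get an orbit-independent average. Your closing remark about Lemma \ref{multiset} and the sum $\sum \min\{k-1,2n-k+1\}$ over primes $k\le 2n$ for Statistic 1285 likewise mirrors the paper's own post-proof discussion, where finding nice closed forms is left as an open problem.
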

\begin{proof}
    In order, the orbit-average values of these statistics over $\pi$'s orbit are:
    \begin{gather*}
        \frac{1}{n}\#\{(i,j)\in\zeta_\pi \mid i+j\text{ is prime}\} \\
        \frac{1}{n}\#\{(i,j)\in\zeta_\pi \mid ij-1\text{ is prime}\} \\
        \frac{1}{n}\#\{(i,j)\in\zeta_\pi \mid ij+1\text{ is prime}\}.
    \end{gather*}
    As usual we prove the homomesy by showing that these values hold for all orbits by applying Lemma \ref{pair_lemma} to remove any information about $\pi$, getting, again in order,
    \begin{gather*}
        \frac{1}{n}\#\{(i,j)\in[n]\times [n] \mid i+j\text{ is prime}\} \\
        \frac{1}{n}\#\{(i,j)\in[n]\times [n] \mid ij-1\text{ is prime}\} \\
        \frac{1}{n}\#\{(i,j)\in[n]\times [n] \mid ij+1\text{ is prime}\}.
    \end{gather*}
    \end{proof}
    We leave it as an open problem to find nice expressions for these values. In the first case, using Lemma \ref{multiset}, we have $\frac{1}{n}\#\{(i,j)\in[n]\times [n] \mid i+j\text{ is prime}\}=\frac{1}{n}\sum_{i=1}^{\Pi(2n)}\min\{p_i-1,2n-p_i+1\}$ where $\Pi$ is the prime counting function, which on input $x$ gives the number of primes less than or equal to $x$, and where $p_i$ is the $i$th prime. This is sequence A292918 in the OEIS (\cite{OEIS}), and can be equivalently formulated as $\sum_{i=1}^n\Pi(n+i)-\Pi(i)$, which sums the pairs $(i,j)\in[n]\times[n]$ with $i+j$ prime for fixed $i$ as it ranges through the values in $[n]$. The other values have similar formulations.

\begin{prop} (Statistic 1293)
    The statistic $lcm\{1,2,\dots,2n\}\sum_i\frac{1}{i+\pi(i)}$ is homomesic for permutations with rotation.
\end{prop}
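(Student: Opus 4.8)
The plan is to follow the same template used throughout this section. First I would write the orbit-average of the statistic over $\pi$'s orbit, pulling the constant $\ell$ outside the average, as
\[
\frac{\ell}{n}\sum_{\sigma\in\mathcal{O}_\pi}\sum_i\frac{1}{i+\sigma(i)}=\frac{\ell}{n}\sum_{(i,j)\in\zeta_\pi}\frac{1}{i+j}.
\]
Applying Lemma \ref{pair_lemma} to replace $\zeta_\pi$ with $[n]\times[n]$ yields an expression depending only on $n$, which already establishes the homomesy; all the remaining work is to evaluate the constant.

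Next I would collapse the double sum into a single sum using Lemma \ref{multiset}, which records that $i+j$ takes each value $k\in\{2,\dots,2n\}$ with multiplicity $\min\{k-1,2n-k+1\}$. This rewrites the sum as
\[
\sum_{(i,j)\in[n]\times[n]}\frac{1}{i+j}=\sum_{k=2}^{2n}\frac{\min\{k-1,2n-k+1\}}{k}.
\]
Since $k-1\le 2n-k+1$ exactly when $k\le n+1$, I would split the sum at $k=n+1$, using the numerator $k-1$ for $2\le k\le n+1$ and $2n-k+1$ for $n+2\le k\le 2n$.

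The main obstacle is the harmonic-number bookkeeping in the two resulting pieces. For the first I would write $\frac{k-1}{k}=1-\frac{1}{k}$, so that the sum telescopes into $n+1-H_{n+1}$; for the second I would separate $\frac{2n-k+1}{k}=(2n+1)\frac{1}{k}-1$ to obtain $(2n+1)(H_{2n}-H_{n+1})-(n-1)$. Adding these and simplifying with $H_{n+1}=H_n+\frac{1}{n+1}$ (so that $(2n+2)H_{n+1}=(2n+2)H_n+2$) should collapse everything to $(2n+1)H_{2n}-(2n+2)H_n$, giving orbit-average $\frac{\ell}{n}\bigl((2n+1)H_{2n}-(2n+2)H_n\bigr)$ as claimed. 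The only real care needed is handling the boundary index $k=n+1$ consistently and tracking the constant terms $n+1$ and $-(n-1)$, which cancel cleanly against each other.
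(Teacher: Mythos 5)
Your proposal is correct and follows essentially the same route as the paper: orbit-averaging via Lemma \ref{pair_lemma}, collapsing the double sum via Lemma \ref{multiset}, and splitting at $k=n+1$; the only (immaterial) difference is that you assign the boundary term $k=n+1$ to the first piece rather than the second, which is valid since both numerators equal $n$ there, and your harmonic-number bookkeeping correctly recovers $\frac{\ell}{n}\bigl((2n+1)H_{2n}-(2n+2)H_n\bigr)$.
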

\begin{proof}
    We compute the average of this statistic over $\pi$'s orbit and apply Lemma \ref{pair_lemma} to see that this statistic is homomesic for permutations with rotation (note that $\ell=lcm\{1,2,\dots,2n\}$ is fixed for a given $n$):
    \[
    \frac{1}{n}\sum_{(i,j)\in\zeta_\pi}\frac{\ell}{i+j}=\frac{\ell}{n}\sum_{(i,j)\in[n]\times [n]}\frac{1}{i+j}.
    \]
    By Lemma \ref{multiset}, this is
\[
\frac{\ell}{n}\sum_{i+j=k=2}^{2n}\frac{\min\{k-1,2n-k+1\}}{k}.
\]
Since $k-1$, which increases linearly, and $2n-k+1$, which decreases linearly, coincide at $k=n+1$, we may split the sum as
\[
\frac{\ell}{n}\Bigl(\sum_{k=2}^n\frac{k-1}{k}+\sum_{k=n+1}^{2n}\frac{2n-k+1}{k}\Bigr)=\frac{\ell}{n}\Bigl((n-1)-(H_n-1)-n+(2n+1)(H_{2n}-H_n)\Bigr)=\frac{\ell}{n}\Bigl((2n+1)H_{2n}-(2n+2)H_n\Bigr)
\]
where $H_m$ is the $m$-th harmonic number, $\sum_{i=1}^m\frac{1}{i}$.
    \end{proof}

\begin{prop} (Statistic 1801)
    The statistic counting half the number of preimage-image pairs of different parity in a permutation is $(\frac{1}{n}\lfloor \frac{n}{2} \rfloor\lceil\frac{n}{2}\rceil)$-mesic for permutations with rotation.
\end{prop}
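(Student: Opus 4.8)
The plan is to follow the now-standard $\zeta$-reduction used throughout this section. First I would write the orbit-average of this statistic over $\pi$'s orbit. Since on a single permutation $\sigma$ the statistic equals $\tfrac{1}{2}$ times the number of indices $i$ for which $i$ and $\sigma(i)$ have different parity, summing over the orbit and dividing by $n$ gives
\[
\frac{1}{2n}\#\{(i,j)\in\zeta_\pi \mid i\not\equiv j \pmod 2\}.
\]
By Lemma \ref{pair_lemma}, $\zeta_\pi=[n]\times[n]$ for every orbit, so this equals
\[
\frac{1}{2n}\#\{(i,j)\in[n]\times[n] \mid i\not\equiv j \pmod 2\},
\]
an expression depending only on $n$; this already establishes homomesy.

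It then remains to evaluate the count. There are $\lceil n/2\rceil$ odd and $\lfloor n/2\rfloor$ even elements of $[n]$, so the different-parity pairs split into those with $i$ odd and $j$ even, of which there are $\lceil n/2\rceil\lfloor n/2\rfloor$, and those with $i$ even and $j$ odd, another $\lfloor n/2\rfloor\lceil n/2\rceil$. Hence the count is $2\lfloor n/2\rfloor\lceil n/2\rceil$, and the orbit-average is
\[
\frac{1}{2n}\cdot 2\lfloor n/2\rfloor\lceil n/2\rceil=\frac{1}{n}\lfloor n/2\rfloor\lceil n/2\rceil,
\]
as claimed.

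There is no substantive obstacle here; the only point requiring care is the factor of $\tfrac{1}{2}$ in the statistic's definition, which I would justify with a short remark. For any fixed permutation the number of different-parity pairs is always even: letting $a$ denote the number of even indices sent to odd values, the fact that $\pi$ is a bijection preserving the total count of even values forces exactly $a$ odd indices to be sent to even values, so the total is $2a$. Thus the statistic is the integer $a$, and the factor of $\tfrac{1}{2}$ simply passes through the orbit sum to yield the stated average, with no effect on the homomesy argument itself.
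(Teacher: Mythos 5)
Your proof is correct and takes essentially the same approach as the paper's: both reduce the orbit-average to $\frac{1}{2n}\#\{(i,j)\in[n]\times[n]\mid i+j\text{ odd}\}$ via Lemma \ref{pair_lemma} and then count the different-parity pairs as $2\lfloor\frac{n}{2}\rfloor\lceil\frac{n}{2}\rceil$ by pairing one odd with one even value in either order. Your closing observation that the pair count is always even (so the statistic is an integer and the factor of $\tfrac{1}{2}$ passes through the orbit sum) is a small extra verification the paper leaves implicit, but it does not change the argument.
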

\begin{proof}
    We compute the average of this statistic over $\pi$'s orbit and apply Lemma \ref{pair_lemma} to see that this statistic is homomesic for permutations with rotation:
    \[
    \frac{1}{2n}\#\{(i,j)\in \zeta_{\pi} \mid i+j \text{ is odd}\}=\frac{1}{2n}\#\{(i,j)\in [n]\times [n] \mid i+j \text{ is odd}\}.
    \]
To get a pair $(i,j)\in[n]\times[n]$ with $i+j$ odd, we choose one odd value (out of $\lceil \frac{n}{2}\rceil$), one even value (out of $\lfloor\frac{n}{2}\rfloor$) and order them in one of two ways. Hence the orbit-average value of this statistic is $\frac{1}{2n}2\lfloor\frac{n}{2}\rfloor\lceil \frac{n}{2}\rceil=\frac{1}{n}\lfloor \frac{n}{2} \rfloor\lceil\frac{n}{2}\rceil$, as desired.

\end{proof}

This concludes the homomesy results for permutations with rotation. To reiterate: because of the computational methods used to identify these statistics as candidates, any permutation statistic listed in FindStat but not shown to be homomesic thus far \textit{is not homomesic for permutations with rotation}, for we found explicit counterexamples. The next section generalizes the homomesy results just proven.
\section{Rotation as toggling and generalization} \label{sec3}
The rotation map with which we have worked so far can be seen as a composition of toggles, or local swaps, of the entries of a permutation. In this case the toggles are simple transpositions, that is, swaps of adjacent entries. Denote by $\tau_i(\pi)$ the simple transposition swapping $\pi(i)$ and $\pi(i+1)$. Then $rot(\pi)$ is equivalent to the composition $\tau_{n-1}(\pi)\cdots\tau_{1}(\pi)$. Taking $\pi=3176524\in S_7$ as an example, we can represent this visually (Figure \ref{fig:rot_tog}). The last step is exactly the permutation $rot(\pi)$. Applying a composition of simple transpositions to $\pi$ is equivalent to multiplying $\pi$ on the right by the permutation obtained by applying the composition to the identity permutation. Thus the toggling definition of rotation on $\pi$ is equivalent to multiplying $\pi$ on the right by the long cycle $c=(12\dots n)$, which is obtained by applying the rotation sequence of toggles to the identity permutation. We can then rewrite $\mathcal{O}_\pi$ as $\{\pi\cdot c^k\mid 0\leq k\leq n-1\}=\{\pi\cdot c^k\mid 1\leq k\leq n\}$. What can we do with this toggling definition of rotation?

First, the rowmotion action, defined on various combinatorial structures, also has a natural definition in terms of toggling. Rowmotion on posets \cite{rowmotion_toggling, SW2012}, and in particular on products of chains, creates well-defined orbits for which some classic homomesy results are known \cite{PR2015}. In \cite{ben_sergi}, Ben Adenbaum and Sergi Elizalde define rowmotion on $321$-avoiding permutations by means of correspondences with Dyck paths and order ideals of Type-A root posets (and prove some homomesies for this map). But rowmotion on permutations in general remains undefined. A natural way to approach this problem is through the notion of toggling, for rowmotion on posets and tableaux have equivalent definitions of this sort. We have seen that permutations are receptive to local swapping actions in the form of simple transpositions. Another reason to consider a toggling definition of rowmotion on permutations is that the toggling definition of rowmotion on posets can be lifted to the birational and noncommutative realms while preserving important dynamical properties (\cite{lift1, lift2}).
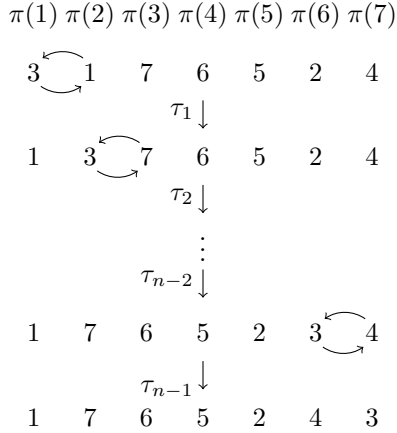
\begin{figure}
    \centering
\begin{tikzpicture}[scale=0.75]
    \node (p1) at (-3,1) {$\pi(1)$};
    \node (p2) at (-2,1) {$\pi(2)$};
    \node (p3) at (-1,1) {$\pi(3)$};
    \node (p4) at (0,1) {$\pi(4)$};
    \node (p5) at (1,1) {$\pi(5)$};
    \node (p6) at (2,1) {$\pi(6)$};
    \node (p7) at (3,1) {$\pi(7)$};

    \node (a1) at (-3,0) {3};
    \node (a2) at (-2,0) {1};
    \node (a3) at (-1,0) {7};
    \node (a4) at (0,0) {6};
    \node (a5) at (1,0) {5};
    \node (a6) at (2,0) {2};
    \node (a7) at (3,0) {4};

    \draw[->] (-2.875,-.25) arc (235:305:.625);
    \draw[->] (-2.125,.25) arc (55:125:.625);
    \draw[->] (0,-.5) -- (0,-1) node [above left] {$\tau_1$};

    \node (b1) at (-3,-1.5) {1};
    \node (b2) at (-2,-1.5) {3};
    \node (b3) at (-1,-1.5) {7};
    \node (b4) at (0,-1.5) {6};
    \node (b5) at (1,-1.5) {5};
    \node (b6) at (2,-1.5) {2};
    \node (b7) at (3,-1.5) {4};

    \draw[->] (-1.875,-1.75) arc (235:305:.625);
    \draw[->] (-1.125,-1.25) arc (55:125:.625);
    \draw[->] (0,-2) -- (0,-2.5) node [above left] {$\tau_2$};
    \node at (0,-3) {$\vdots$};
     \draw[->] (0,-3.5) -- (0,-4) node [above left] {$\tau_{n-2}$};

    \node (c1) at (-3,-4.625) {1};
    \node (c2) at (-2,-4.625) {7};
    \node (c3) at (-1,-4.625) {6};
    \node (c4) at (0,-4.625) {5};
    \node (c5) at (1,-4.625) {2};
    \node (c6) at (2,-4.625) {3};
    \node (c7) at (3,-4.625) {4};

    \draw[->] (2.125,-4.875) arc (235:305:.625);
    \draw[->] (2.875,-4.375) arc (55:125:.625);
    \draw[->] (0,-5.125) -- (0,-5.625) node [left] {$\tau_{n-1}$};

    \node (d1) at (-3,-6.125) {1};
    \node (d2) at (-2,-6.125) {7};
    \node (d3) at (-1,-6.125) {6};
    \node (d4) at (0,-6.125) {5};
    \node (d5) at (1,-6.125) {2};
    \node (d6) at (2,-6.125) {4};
    \node (d7) at (3,-6.125) {3};
    
\end{tikzpicture}
    \caption{The $rot$ map represented as a composition of toggles.}
    \label{fig:rot_tog}
\end{figure}

The question then arises (and was raised originally to the second author by Jim Propp) whether the order in which we apply these simple transpositions matters. Which of the homomesies here proven hold for the map from $S_n$ to itself given by the toggles $\{\tau_i\mid i\in[n-1]\}$ applied to $\pi$ in an arbitrary order, rather than in the specific order that produces rotation? Surprisingly, all of them do.
\begin{thm}
    All homomesies proven in Section \ref{sec2} hold not only for the rotation map, but for each map given by applying each of the simple transpositions $\{\tau_i\mid i\in[n-1]\}$ to $\pi$ exactly once in an arbitrary order.
\end{thm}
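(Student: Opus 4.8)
The plan is to reduce everything to the single structural fact underlying Section~\ref{sec2}: every homomesy proven there was established by rewriting the orbit-average of the statistic as a quantity depending only on the set $\zeta_\pi$, and then invoking the identity $\zeta_\pi=[n]\times[n]$ (a consequence of Lemma~\ref{pair_lemma}), possibly together with Lemma~\ref{lin_com}. The one proof phrased differently, for the non-inversion sum, can be recast in this mold: since $is(\pi)+nis(\pi)=\sum_{1\le i<j\le n}(j-i)=\binom{n+1}{3}$ is constant on $S_n$, and inversion sum is itself half of Spearman's rho (a genuine $\zeta_\pi$-quantity), homomesy of $nis$ follows from that of $is$ by Lemma~\ref{lin_com}, with no appeal to the reversal symmetry. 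Consequently, to transfer all $34$ results to an arbitrary toggle-order map it suffices to prove that the identity $\zeta_\pi=[n]\times[n]$, together with the orbit-size statement of Lemma~\ref{orb_struc}, continues to hold for each such map.

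First I would record, as already noted in this section, that applying the toggles $\{\tau_i\mid i\in[n-1]\}$ to $\pi$ in any fixed order is the same as right-multiplication $\pi\mapsto\pi c$, where $c$ is the image of the identity under the same sequence of simple transpositions; thus $c$ is a product of the adjacent transpositions $(i,i+1)$, each appearing exactly once, in that order. The orbit of $\pi$ is then $\{\pi c^k\mid 0\le k\le n-1\}$. Next I would observe that both structural facts follow at once once we know $c$ is an $n$-cycle: since $(\pi c^k)(i)=\pi(c^k(i))$ and the $c$-orbit of any $i\in[n]$ runs through all of $[n]$ exactly once as $k$ ranges over $0,\dots,n-1$, the values $(\pi c^k)(i)$ take each element of $[n]$ without repetition, which is precisely Lemma~\ref{pair_lemma} for this map; and because $c$ has order $n$, the $n$ permutations $\pi c^k$ are distinct, giving orbit size $n$ as in Lemma~\ref{orb_struc}. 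From these, the derivation of $\zeta_\pi=[n]\times[n]$ proceeds verbatim.

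The crux is therefore to show that any product of the adjacent transpositions $(1,2),\dots,(n-1,n)$, each used exactly once and in an arbitrary order, is an $n$-cycle. These products are exactly the Coxeter elements of $S_n$, which are classically $n$-cycles; equivalently, the transpositions are the edges of the path $1{-}2{-}\cdots{-}n$, a spanning tree of $[n]$, and a product of $n-1$ transpositions whose edge set is a tree is always an $n$-cycle. I would make this self-contained by induction on $n$: writing the word as $L\,(n-1,n)\,R$, where $(n-1,n)$ is the unique occurrence of that transposition, the product $c'=LR$ of $(1,2),\dots,(n-2,n-1)$ is by induction an $(n-1)$-cycle on $[n-1]$ fixing $n$. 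Since $L$ lies in the subgroup generated by $(1,2),\dots,(n-2,n-1)$ it fixes $n$, so $c=\bigl(L\,(n-1,n)\,L^{-1}\bigr)c'=(m,n)\,c'$ with $m=L(n-1)\in[n-1]$; multiplying the $(n-1)$-cycle $c'$ on the left by a transposition that shares exactly one letter with its support splices $n$ into the cycle, yielding an $n$-cycle (the base cases $n=1,2$ being immediate).

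With $c$ an $n$-cycle for every ordering, the two structural facts of the second paragraph hold, so $\zeta_\pi=[n]\times[n]$ for every orbit of every toggle-order map, and each of the $34$ homomesy arguments of Section~\ref{sec2} goes through unchanged. I expect the inductive verification that every ordering yields an $n$-cycle to be the only genuine obstacle; the remainder is bookkeeping, namely checking that no argument in Section~\ref{sec2} used anything about the orbits beyond Lemmas~\ref{orb_struc}, \ref{pair_lemma}, and~\ref{lin_com}.
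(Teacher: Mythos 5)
Your proof is correct, and its skeleton matches the paper's: identify each toggle-order map with right multiplication by an $n$-cycle (a Coxeter element of $S_n$), check that Lemmas \ref{orb_struc} and \ref{pair_lemma} hold for any such map, and conclude that the arguments of Section \ref{sec2} go through. You go beyond the paper in two places, and both additions are worthwhile. First, where the paper simply asserts (with a footnote appeal to the theory of Coxeter elements) that every product of the simple transpositions, each used exactly once in any order, is an $n$-cycle, you prove it by a self-contained induction: peel off the unique occurrence of $(n-1,n)$, write $c=\bigl(L(n-1,n)L^{-1}\bigr)LR=(m,n)c'$ with $c'=LR$ an $(n-1)$-cycle fixing $n$, and splice $n$ into the cycle; this argument is sound. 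Second, and more substantively, you caught the one proof in Section \ref{sec2} that does \emph{not} reduce to $\zeta_\pi=[n]\times[n]$ plus Lemma \ref{lin_com}: the non-inversion sum (Stat 341), which the paper proved via the fact that reversing a rotation orbit yields another rotation orbit. That symmetry is special to rotation: for a general $n$-cycle $c$, reversal carries the orbit $\{\pi c^k\}$ to the orbit of $\mathcal{R}(\pi)$ under right multiplication by $w_0cw_0$ (where $w_0$ is the longest element), and $w_0cw_0$ need not be a power of $c$, so the paper's blanket claim that ``these lemmas were sufficient to prove all the indicated homomesies'' is slightly too quick at exactly this point. Your recast --- $nis=\binom{n+1}{3}-is$ is a linear combination of a constant statistic and the inversion sum, the latter being half of Spearman's rho and hence a genuine $\zeta_\pi$-quantity --- closes this gap cleanly. (Alternatively, one could salvage the reversal argument by noting that $w_0cw_0$ is again a Coxeter element and quantifying over all toggle orders simultaneously, but your fix is simpler.)
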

\begin{proof}
     The permutations given by applying each of the simple transpositions $\{\tau_i\mid i\in[n-1]\}$ to the identity permutation exactly once in all possible orders are exactly the $(n-1)!$ cycles of length $n$ in $S_n$.\footnote{These are called the Coxeter elements of the symmetric group; they are equivalently the conjugates of the long cycle $(12\dots n)$. In general the Coxeter elements of a reflection group are the elements given by multiplying the simple reflections each exactly once in any order.} Hence applying the simple transpositions to $\pi$ in any order is equivalent to multiplying on the right by some cycle of length $n$.
    
    We consider this right multiplication as a map from $S_n$ to itself. It is not hard to see that the orbit structure generated by right multiplication by an $n$-cycle is characterized by Lemmas \ref{orb_struc} and \ref{pair_lemma}. Clearly the map generates orbits all of size $n$, and indeed the cycle tells us precisely in which order the entries in $[n]$ occupy the positions in $[n]$. These lemmas were sufficient to prove all the indicated homomesies.
\end{proof}
This result suggests a generalization of the results of the previous section to reflection groups in general, especially since the simple transpositions $\tau_i$ have direct analogs in the simple reflections of any reflection group. We owe the following example, which generalizes the fixed point homomesy, to Theo Douvropoulos. The first step is to formulate the fixed point result in more general terms, as ``fixed point" does not have a natural interpretation for other reflection group types. For this we use some representation theory.

The number of fixed points of $\pi$ is the trace of $\pi$ under the standard representation of $S_n$. The standard representation of $S_n$ is the sum of a trivial representation, for which all elements have trace 1, and the representation indexed by $\lambda=(n-1,1)$. Thus the result that the fixed point statistic is 1-mesic for permutations with rotation is equivalent to the statement that the statistic given by the trace of $\pi$ under the representation indexed by $\lambda=(n-1,1)$ is $0$-mesic for permutations with right multiplication by any $n$-cycle $c$. That is,
\[
\sum_{k=1}^n Tr_{(n-1,1)}(\pi\cdot c^k) = 0.
\]
Since the representation indexed by $\lambda=(n-1,1)$ is the reflection representation of $S_n$, we can generalize this result in the following way. 
\begin{prop}
    Let $W$ be a reflection group with reflection representation $V$, arbitrary element $g$, and element $c$ that is the product of all simple reflections exactly once (a Coxeter element) and of order $h$. Then the statistic $Tr_V(g)$ is $0$-mesic for W with right-multiplication by $c$. That is,
    \[
    \sum_{k=1}^h Tr_V(g\cdot c^k)=0. 
    \]
\end{prop}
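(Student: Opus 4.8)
The plan is to recognize the sum as a group-averaging (Reynolds) operator over the cyclic group generated by $c$, and then to invoke one structural fact about Coxeter elements. Write $\rho\colon W\to GL(V)$ for the reflection representation, so that $Tr_V(g)=\mathrm{tr}\,\rho(g)$. Since $c$ has order $h$, the powers $c^1,\dots,c^h$ run over all of $\langle c\rangle$ (with $c^h=1$), so summing over $k=1,\dots,h$ is the same as summing over $k=0,\dots,h-1$. Using linearity of the trace and $\rho(g\cdot c^k)=\rho(g)\rho(c)^k$, I would first rewrite
\[
\sum_{k=1}^h Tr_V(g\cdot c^k)=\mathrm{tr}\Bigl(\rho(g)\sum_{k=0}^{h-1}\rho(c)^k\Bigr).
\]

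Next I would use the standard fact that averaging a representation over a finite group gives the projection onto its invariants: the operator $P=\tfrac1h\sum_{k=0}^{h-1}\rho(c)^k$ is the projection of $V$ onto the fixed subspace $V^{\langle c\rangle}=V^c$. (That $P^2=P$ and $\mathrm{im}\,P=V^c$ follow from $\rho(c)P=P$ together with the fact that $P$ fixes $V^c$ pointwise.) Consequently
\[
\sum_{k=1}^h Tr_V(g\cdot c^k)=h\cdot\mathrm{tr}\bigl(\rho(g)\,P\bigr),
\]
so the entire statement reduces to showing $P=0$, i.e. that $V^c=\{0\}$.

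The crux is therefore the claim that a Coxeter element has no nonzero fixed vector in the reflection representation. For the motivating case $W=S_n$, $V$ is the quotient of the permutation module $\mathbb{C}^n$ by the all-ones line, and $c=(1\,2\,\cdots\,n)$ acts on $\mathbb{C}^n$ with eigenvalues $1,\zeta,\dots,\zeta^{n-1}$ where $\zeta=e^{2\pi i/n}$; passing to the quotient $V$ removes exactly the eigenvalue $1$, so $1$ is not an eigenvalue of $c$ on $V$ and $V^c=\{0\}$. In general the eigenvalues of a Coxeter element on $V$ are $\zeta_h^{m_1},\dots,\zeta_h^{m_n}$ with $\zeta_h=e^{2\pi i/h}$, where the $m_j$ are the exponents of $W$, all lying in $\{1,\dots,h-1\}$; since no exponent is divisible by $h$, the eigenvalue $1$ never occurs, whence $V^c=\{0\}$. (For reducible $W$ one decomposes $W$ and $V$ into irreducible factors, on each of which $c$ restricts to a Coxeter element with no fixed vectors.) Granting $V^c=\{0\}$ gives $P=0$ and hence the claimed vanishing of the sum.

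The main obstacle is precisely this eigenvalue input — that a Coxeter element fixes no nonzero vector of $V$ — since everything else is formal linear algebra valid for any element of any finite group. I would discharge it by citing the Coxeter–Kostant description of the eigenvalues of Coxeter elements through the exponents (or, in the $S_n$ case, by the elementary eigenvalue computation above), which is exactly the statement that the trivial isotypic component of $V$ under $\langle c\rangle$ is zero, making $P$ vanish.
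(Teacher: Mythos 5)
Your proposal is correct and follows essentially the same route as the paper: both arguments reduce everything to the fact that a Coxeter element has no eigenvalue $1$ on the reflection representation (the paper cites Humphreys, \S 3.16; your exponent description $\zeta_h^{m_1},\dots,\zeta_h^{m_n}$ is the same fact), and then conclude that the operator $\mathrm{Id}+\rho(c)+\dots+\rho(c)^{h-1}$ vanishes, so the trace sum is zero by linearity. The only cosmetic difference is that you identify $\tfrac1h\sum_k \rho(c)^k$ as the projection onto $V^c$, whereas the paper checks directly that this sum is the zero matrix via the geometric series $\frac{1-\lambda_i^h}{1-\lambda_i}$ applied to the eigenvalues.
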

\begin{proof}
    Let $A$ and $B$ be the matrices of $c$ and $g$ under the reflection representation. Then $A$ is diagonalizable and $A^h=Id$. Let $\lambda_i$ be the eigenvalues of $A$; by \cite[Lemma in \S 3.16]{Humphreys1990} for all $i$, $\lambda_i\neq 1$.

    Now we have $A^*=A+A^2+\dots +A^{h-1} + Id$ is a diagonalizable matrix. It has eigenvalues $\lambda^*_i=\lambda_i+\lambda_i^2+\dots+\lambda_i^{h-1}+1=\frac{1-\lambda_i^h}{1-\lambda_i}$. Since $A^h=Id$, $\lambda_i^h=1$, and recalling that $\lambda_i\neq 1$, all these eigenvalues $\lambda^*_i$ are zero, which is to say that $A^*=\textbf{0}$, the zero matrix. Hence $B(A+A^2+\dots +A^{h-1} + Id)=BA+BA^2+\dots +BA^{h-1} + BId$ is also the zero matrix. Since the trace is a linear operator, 
    \[
    0=Tr(\textbf{0})=Tr(BA+BA^2+\dots +BA^{h-1} + BId)=\sum_{k=1}^hTr(BA^k)=\sum_{k=1}^h Tr_{V_W}(g\cdot c^k),
    \]
    as desired.
\end{proof}

\section{Restricted and modified toggles} \label{sec4}
The rich generalizations we found by considering rotation on permutations as a composition of toggles motivates further exploration by restricting or modifying the notion of toggles on permutations, which above was limited to simple transpositions. In this section we prove homomesy results for three maps given by such restriction or modification.
\subsection{Pair swapping}
Define the map `pair swapping' from $S_n$ to itself by
\[
ps(\pi)=\begin{cases}
    \pi(2)\pi(1)\pi(4)\pi(3)\dots \pi(n)\pi(n-1) & \text{if }n\text{ is even} \\
    \pi(2)\pi(1)\pi(4)\pi(3)\dots \pi(n-1)\pi(n-2)\pi(n) & \text{if }n\text{ is odd.} 
\end{cases}
\]
When represented pictorially (Figure \ref{fig:ps_tog}), we see that pair swapping is a slight restriction of the toggles which make up rotation. In particular we apply only the transpositions $\tau_i$ for odd $i$: $ps(\pi)=\tau_{n-1}(\pi)\tau_{n-3}(\pi)\cdots\tau_1(\pi)$  when $n$ is even and $\tau_{n-2}(\pi)\tau_{n-4}(\pi)\cdots\tau_1(\pi)$ when $n$ is odd. Pair swapping is clearly an involution.
\begin{figure}
    \centering
\begin{tikzpicture}[scale=0.75]
    \node (p1) at (-3,1) {$\pi(1)$};
    \node (p2) at (-2,1) {$\pi(2)$};
    \node (p3) at (-1,1) {$\pi(3)$};
    \node (p4) at (0,1) {$\pi(4)$};
    \node (p5) at (1,1) {$\pi(5)$};
    \node (p6) at (2,1) {$\pi(6)$};
    \node (p7) at (3,1) {$\pi(7)$};

    \node (a1) at (-3,0) {3};
    \node (a2) at (-2,0) {1};
    \node (a3) at (-1,0) {7};
    \node (a4) at (0,0) {6};
    \node (a5) at (1,0) {5};
    \node (a6) at (2,0) {2};
    \node (a7) at (3,0) {4};

    \draw[->] (-2.875,-.25) arc (235:305:.625);
    \draw[->] (-2.125,.25) arc (55:125:.625);
    \draw[->] (-.875,-.25) arc (235:305:.625);
    \draw[->] (-.125,.25) arc (55:125:.625);
    \draw[->] (1.125,-.25) arc (235:305:.625);
    \draw[->] (1.875,.25) arc (55:125:.625);
    \draw[->] (0,-.5) -- (0,-1);

    \node (b1) at (-3,-1.5) {1};
    \node (b2) at (-2,-1.5) {3};
    \node (b3) at (-1,-1.5) {6};
    \node (b4) at (0,-1.5) {7};
    \node (b5) at (1,-1.5) {2};
    \node (b6) at (2,-1.5) {5};
    \node (b7) at (3,-1.5) {4};
\end{tikzpicture}
    \caption{The $ps$ map represented as a composition of toggles.}
    \label{fig:ps_tog}
\end{figure}
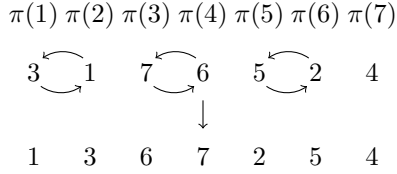

Notice that under this action each value occupies only two positions. On the other hand, the rotation map cycles each value through every position (Lemma \ref{pair_lemma})--this, we have seen, is the property of the map on which many of the homomesy results proven above depended. Pair swapping produces fewer instances of homomesy, at least for statistics in the FindStat database and closely related ones:
\begin{thm} (Statistic 1114)
    The number of odd descents\footnote{We adopt the convention that $\pi\in S_n$ has neither an ascent or a descent at position $n$. It would not be difficult to adjust these results to fit the conventions adopted by other authors.} (i.e. the number of descents at odd positions) is $\frac{1}{2}\lfloor \frac{n}{2} \rfloor$-mesic for permutations with pair swapping.
\end{thm}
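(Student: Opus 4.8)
The plan is to exploit the fact that $ps$ is an involution in order to reduce the homomesy statement to a single pointwise identity. First I would observe that $ps$ has \emph{no} fixed points: since $ps$ swaps the entries in positions $2k-1$ and $2k$, a fixed point would require $\pi(2k-1)=\pi(2k)$ for every pair, which is impossible for a permutation. Hence every orbit has the form $\{\pi, ps(\pi)\}$ and has size exactly $2$. Writing $od$ for the number of odd descents, to establish $\frac{1}{2}\lfloor n/2\rfloor$-mesy it therefore suffices to prove, for every $\pi\in S_n$, the pointwise identity
\[
od(\pi)+od(ps(\pi))=\Bigl\lfloor \tfrac{n}{2}\Bigr\rfloor,
\]
since the orbit average is then exactly $\frac{1}{2}\lfloor n/2\rfloor$.

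The key observation is that a descent at an \emph{odd} position $2k-1$ compares the entries in positions $2k-1$ and $2k$, which are precisely the two entries that $ps$ transposes. Writing $a=\pi(2k-1)$ and $b=\pi(2k)$, there is a descent at position $2k-1$ in $\pi$ if and only if $a>b$, and a descent at that position in $ps(\pi)$ if and only if $b>a$. Since $a\neq b$, exactly one of these holds, so each odd position beginning a swapped pair contributes precisely $1$ to the sum $od(\pi)+od(ps(\pi))$. Note that it is essential that the position be odd: an even descent position would compare entries drawn from two \emph{different} pairs, and the clean behaviour would fail.

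It then remains to count the relevant positions. I would check that the odd positions at which a descent can occur (odd $i$ with $i\leq n-1$) coincide exactly with the positions beginning a swapped pair, in both parities of $n$: when $n$ is even the pairs start at $1,3,\dots,n-1$, and when $n$ is odd they start at $1,3,\dots,n-2$ with position $n$ left fixed. In either case this is exactly $\lfloor n/2\rfloor$ positions, and no odd descent position straddles two distinct pairs. Summing the contribution of $1$ over all these positions yields the displayed identity.

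The computation is entirely routine; the only point requiring care is the bookkeeping separating the two parities of $n$ and confirming that the entry left fixed in the odd case creates no odd descent position lying outside a swapped pair. I expect this parity check to be the main (and fairly modest) obstacle, after which the result follows immediately from the observation that an odd descent lives entirely within a transposed pair.
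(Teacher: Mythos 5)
Your proof is correct and takes essentially the same approach as the paper: both arguments observe that an odd descent position compares exactly the two entries transposed by $ps$, so each of the $\lfloor n/2\rfloor$ swapped pairs contributes exactly one descent across the two permutations of an orbit, giving the average $\frac{1}{2}\lfloor n/2\rfloor$. Your explicit check that $ps$ has no fixed points (so orbits have size exactly $2$) is a small point the paper glosses over, but otherwise the arguments coincide.
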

\begin{proof}
    Notice that the pairs of entries which are swapped with each other are the pairs $(\pi(i),\pi(i+1))$ for odd values of $i$ less than $n$. Since a permutation $\pi\in S_n$ cannot have a descent at position $n$, we want to show that each orbit has the same number of descents at precisely these positions $i$. \\
    
    There are $\lfloor \frac{n}{2} \rfloor$ of these pairs in a permutation $\pi\in S_n$, and for each of them either $\pi(i)<\pi(i+1)$ or $\pi(i)>\pi(i+1)$. In the first case, we have $ps(\pi)(i)=\pi(i+1)>\pi(i)=ps(\pi)(i+1)$, that is, $ps(\pi)$ has a descent  at $i$ and $\pi$ does not. In the second case, $\pi$ has a descent at $i$ by definition, and $ps(\pi)$ does not, as $ps(\pi)(i)=\pi(i+1)<\pi(i)=ps(\pi)(i+1)$. So there is exactly one descent in each orbit at each of $\lfloor \frac{n}{2} \rfloor$ positions $i$ where $i$ is odd and less than $n$. As the orbits all have size 2, the average number of odd descents in each orbit is $\frac{1}{2}\lfloor \frac{n}{2} \rfloor$, as desired.
\end{proof}

    A parallel argument gives the following corollary. Indeed we observe that $\pi$ and $ps(\pi)$ have ascents at odd positions $i$ less than $n$ exactly when they do not have descents there.

    \begin{cor}
        The number of odd ascents (i.e. the number of ascents at odd positions) is $\frac{1}{2}\lfloor \frac{n}{2} \rfloor$-mesic for permutations with pair swapping.
    \end{cor}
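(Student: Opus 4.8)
The plan is to mirror the proof of the preceding theorem on odd descents almost verbatim. First I would fix an odd position $i < n$ and observe, exactly as there, that $ps$ swaps the entries $\pi(i)$ and $\pi(i+1)$ while leaving their relative order reversed. Consequently $\pi$ has an ascent at $i$ (that is, $\pi(i) < \pi(i+1)$) if and only if $ps(\pi)$ has a descent at $i$, and $\pi$ has a descent at $i$ if and only if $ps(\pi)$ has an ascent at $i$. Thus at each such position exactly one of the two permutations $\pi, ps(\pi)$ carries an ascent, so precisely one odd ascent at position $i$ occurs across the two-element orbit $\{\pi, ps(\pi)\}$.

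Next I would count the relevant positions. There are $\lfloor \frac{n}{2} \rfloor$ odd positions $i < n$ (namely $1, 3, 5, \dots$), and by the previous paragraph each contributes exactly one odd ascent to the orbit. Hence every orbit contains exactly $\lfloor \frac{n}{2} \rfloor$ odd ascents in total, and since pair swapping is an involution and all its orbits have size $2$, the orbit-average is $\frac{1}{2}\lfloor \frac{n}{2} \rfloor$.

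Alternatively, and perhaps more slickly, I would invoke Lemma \ref{lin_com}. Under the paper's convention that position $n$ carries neither an ascent nor a descent, every odd position $i < n$ carries exactly one of the two, so for every $\pi$ the sum (number of odd ascents) $+$ (number of odd descents) equals the constant $\lfloor \frac{n}{2} \rfloor$. A constant statistic is trivially homomesic with that constant as its orbit-average, so by linearity the number of odd ascents, being $\lfloor \frac{n}{2} \rfloor$ minus the number of odd descents, is $\left(\lfloor \frac{n}{2} \rfloor - \frac{1}{2}\lfloor \frac{n}{2} \rfloor\right)$-mesic, i.e. $\frac{1}{2}\lfloor \frac{n}{2} \rfloor$-mesic, using the value just established for odd descents.

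There is essentially no obstacle here; the result is an immediate parallel of the theorem. The only point requiring minor care is the bookkeeping of which indices count as odd positions less than $n$: one should verify that this number is $\lfloor \frac{n}{2} \rfloor$ in both the even case (positions $1, 3, \dots, n-1$) and the odd case (positions $1, 3, \dots, n-2$), so that the same formula governs both parities.
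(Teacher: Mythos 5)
Your proposal is correct and matches the paper's approach: the paper likewise dispatches this corollary by noting that the argument for odd descents carries over in parallel, since at each odd position $i<n$ a permutation has an ascent exactly when it does not have a descent. Your alternative complementation argument via Lemma \ref{lin_com} is just a formalization of that same observation, so there is no substantive difference.
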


No other statistic in the FindStat database is homomesic for permutations with pair swapping.
\subsection{Parity rotation}
Define the map `parity rotation' from $S_n$ to itself by
\[
    parrot(\pi)=\begin{cases}
    \pi(3)\pi(4)\pi(5)\dots \pi(n)\pi(1)\pi(2) & \text{if }n\text{ is even} \\
    \pi(3)\pi(4)\pi(5)\dots \pi(n)\pi(2)\pi(1) & \text{if }n\text{ is odd.} 
\end{cases}
\]
When $i>1$, this map acts on $\pi$ by moving the entry in the $i$th even (resp. odd) position to the $(i-1)$st even (resp. odd) position, and it moves the entry in the first even (resp. odd) position to the last even (resp. odd) position. Again $parrot$ is a composition of transpositions, but they are no longer simple: $\bigl(\pi(n-1),\pi(n-3)\bigr)\cdots\bigl(\pi(3),\pi(1)\bigr)\bigl(\pi(n),\pi(n-2)\bigr)\cdots\bigl(\pi(4),\pi(2)\bigr)$ when $n$ is even, and $\bigl(\pi(n),\pi(n-2)\bigr)\cdots\bigl(\pi(3),\pi(1)\bigr)\bigl(\pi(n-1),\pi(n-3)\bigr)\cdots\bigl(\pi(4),\pi(2)\bigr)$ when $n$ is odd. It is clear why the name `parity rotation' fits: we rotate the entries such that the parities of their positions remain the same. Given this action, we make the following useful definition.

There are $\lfloor \frac{n}{2} \rfloor$ entries of $\pi$ in even positions, which return to their original positions after $\lfloor \frac{n}{2} \rfloor$ applications of $parrot$; similarly the $\lceil \frac{n}{2} \rceil$ entries of $\pi$ in odd positions return to their original positions after $\lceil \frac{n}{2} \rceil$ applications of $parrot$. Thus orbits generated by $parrot$ on $S_n$ have size $lcm(\lceil \frac{n}{2} \rceil \lfloor \frac{n}{2} \rfloor)$. When $n$ is even, this is just $\frac{n}{2}$. When $n$ is odd, this is $\lceil \frac{n}{2} \rceil \lfloor \frac{n}{2} \rfloor=\frac{n^2-1}{4}$. So, for instance, the identity permutation in $S_5$ is in an orbit of size $6=\lceil\frac{5}{2}\rceil\lfloor\frac{5}{2}\rfloor$  ($12345\mapsto34521\mapsto52143\mapsto14325\mapsto32541\mapsto54123\mapsto12345$), and the identity in $S_6$ is in an orbit of size $3=\frac{6}{2}$ ($123456\mapsto345612\mapsto561234\mapsto123456$). \\

Because of the difference in orbit structure for even and odd values of $n$, there are no statistics in the FindStat database homomesic for permutations with the parity rotation map in generality. But if we restrict to even or odd values of $n$, then we find some results:

\begin{thm}
    When $n$ is even, then the following statistics are homomesic for permutations with parity rotation:
    \begin{itemize}
        \item Stat 236: \textit{the number of cyclical small weak excedances} (average: $2$)
        \item Stat 242: \textit{the number of indices that are not cyclical small weak excedances} (average: $n-2$)
    \end{itemize}
    \textit{and when $n$ is odd, then the following statistics are homomesic for permutations with parity rotation:}
    \begin{itemize}
        \item Stat 21: \textit{the number of descents of a permutation} (average: $\frac{n-1}{2}$)
        \item Stat 245: \textit{the number of ascents of a permutation} (average: $\frac{n-1}{2}$)
        \item Stat 470: \textit{the number of runs in a permutation} (average: $\frac{n+1}{2}$)
        \item Stat 325: \textit{the width of the tree associated to a permutation} (average: $\frac{n+1}{2}$)
        \item Stat 1520: \textit{the number of strict 3-descents} (average: $\frac{n-3}{2}$)
    \end{itemize}
\end{thm}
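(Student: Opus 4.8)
The plan is to exploit the fact that $parrot$ acts as two independent cyclic shifts: it rotates the subword occupying the odd positions (of length $p=\lceil n/2\rceil$) and, separately, the subword occupying the even positions (of length $q=\lfloor n/2\rfloor$). Concretely, writing $\alpha_a=\pi(2a+1)$ and $\beta_b=\pi(2b+2)$, one checks that $parrot^m(\pi)$ carries the value $\alpha_{a+m\bmod p}$ in position $2a+1$ and the value $\beta_{b+m\bmod q}$ in position $2b+2$. In particular the set $A=\{\alpha_a\}$ of odd-position values and the set $B=\{\beta_b\}$ of even-position values are constant along each orbit and partition $[n]$, and the orbit has size $L=\mathrm{lcm}(p,q)$. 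All five odd-$n$ statistics compare entries in two positions of opposite parity, while the two even-$n$ statistics compare an entry to its own position, so I would isolate these two regimes into separate lemmas.

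For odd $n$ the key observation is that $\gcd(p,q)=1$, so by the Chinese Remainder Theorem the assignment $m\mapsto(m\bmod p,\,m\bmod q)$ is a bijection. Hence for any two positions of opposite parity, as $\sigma$ runs over a single orbit the ordered pair of entries in those positions runs over all of $A\times B$ (or $B\times A$) exactly once. I would apply this first to descents (Stat 21): the orbit-total number of descents is $\sum_{i\in[n-1]}\#\{\sigma\in\mathcal{O}:\sigma(i)>\sigma(i+1)\}$, and positions $i,i+1$ always have opposite parity, so each summand equals $\#\{(x,y)\in A\times B:x>y\}$ for odd $i$ and $\#\{(x,y)\in A\times B:y>x\}$ for even $i$. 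Since $A$ and $B$ are disjoint these two counts sum to $|A|\,|B|=pq=L$, and there are $(n-1)/2$ positions of each parity in $[n-1]$, so the orbit-total is $\tfrac{n-1}{2}L$ independent of the orbit, giving average $\tfrac{n-1}{2}$. The identical computation with offset $3$ in place of offset $1$ handles strict $3$-descents (Stat 1520), comparing positions $i$ and $i+3$ (again opposite parity, since $3$ is odd) over $i\in[n-3]$ and yielding average $\tfrac{n-3}{2}$. Ascents (Stat 245) and runs (Stat 470) then follow from Lemma~\ref{lin_com}, being respectively $(n-1)$ minus and $1$ plus the number of descents; and for the width of the tree (Stat 325) I would show that it coincides with the number of runs, so that it too is $\tfrac{n+1}{2}$-mesic by Lemma~\ref{lin_com}.

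For even $n$ we have $p=q=n/2=L$, so the opposite-parity pairing degenerates: the entry-pair now traces only a diagonal of $A\times B$ whose content depends on the cyclic alignment of $A$ and $B$, which is precisely why the descent-type statistics fail in this regime. Instead the relevant fact is that, over one orbit, the entry in any fixed position takes each value of its own parity class exactly once. For the number of cyclical small weak excedances (Stat 236) I would write the orbit-total as $\sum_i\#\{\sigma:\sigma(i)\in\{i,\,i+1\bmod n\}\}$; by the above, the contribution of an odd position $i$ is $[\,i\in A\,]+[\,i+1\in A\,]$ and that of an even position is $[\,i\in B\,]+[\,i+1\bmod n\in B\,]$. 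Summing over all odd positions, the indices $i$ and $i+1$ sweep out all of $[n]$ and contribute $|A|$; summing over all even positions (the cyclic wrap at $i=n$ supplying the value $1$) contributes $|B|$; so the orbit-total is $|A|+|B|=n$ and the average is $n/(n/2)=2$. Stat 242, being $n$ minus Stat 236, is then $(n-2)$-mesic by Lemma~\ref{lin_com}.

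The main obstacle I anticipate is the width statistic (Stat 325): every other case is driven by the single pairing/sweeping lemma, but relating the width of the associated tree to the number of runs requires unpacking the tree construction and is the one step lying outside this framework. A secondary point requiring care is the cyclic wraparound in the even-$n$ excedance count, where one must verify that the ``$+1$'' at position $n$ correctly supplies the value $1$ so that the indices still cover $[n]$ without overlap.
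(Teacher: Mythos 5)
Your proposal is correct, and it takes a genuinely different route from the paper for the odd-$n$ statistics. Your key lemma is the product structure: since $\lceil n/2\rceil$ and $\lfloor n/2\rfloor$ are coprime when $n$ is odd, the Chinese Remainder Theorem shows that for \emph{any} two fixed positions of opposite parity, the ordered pair of entries there sweeps out all of $A\times B$ exactly once per orbit. The paper instead proves an adjacency-counting statement (Lemma \ref{n_odd}): each value pair $(e,d)\in E_\pi\times D_\pi$ appears consecutively in each order exactly $\lfloor n/2\rfloor$ times across the orbit. Your lemma is both stronger and easier to apply uniformly; the payoff is most visible for Stat 1520, which for you is the ``identical computation with offset $3$,'' whereas the paper needs a separate bijection between descents and cyclical $3$-descents followed by a correction for the overcount at positions $n-1$ and $n-2$. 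For even $n$ the two arguments rest on the same fact (the paper's Lemma \ref{mod_pair}, your observation that each position's entry sweeps its value class once per orbit); you organize the count by position, letting the indices $i$ and $i+1\bmod n$ sweep $[n]$, while the paper organizes it by image value via its four-set classification --- essentially equivalent bookkeeping. One small caveat on both sides: the identification of Stat 325 (tree width) with the number of runs is asserted without proof in the paper just as in your outline, so flagging it as the remaining gap is fair, but you would be expected to either cite or verify that identity; with it, Stats 245, 470, 325 follow from Lemma \ref{lin_com} exactly as the paper does.
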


Note that when $n$ is even $parrot=rot^2$, which is not true when $n$ is odd. Hence statistics which are homomesic for permutations in $S_n$ for even $n$ with $parrot$ must indeed be homomesic for permutations with rotation, which again is not true when $n$ is odd. These facts are borne out in the theorem.

We will use the following definition in the proofs below:

\begin{definition} \label{parsets}
    Let $E_\pi=\{i=\pi(e)\mid e\text{ even}\}$, the set of entries in even positions in $\pi$, and let $D_\pi=\{j=\pi(d)\mid d\text{ odd}\}$, the set of entries in odd positions in $\pi$.
\end{definition}

Then by the definition of $parrot$, $E_\sigma=E_\pi$ and $D_\sigma=D_\pi$ for any $\sigma\in \mathcal{O}_\pi$.

\begin{prop} \label{parroteven}(Statistics 236, 242*)
    If $n$ is even, then the number of cyclical small weak excedances is 2-mesic for $S_n$ with parity rotation.
\end{prop}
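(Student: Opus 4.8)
The plan is to mimic the $\zeta$-set strategy used throughout Section~\ref{sec2}, adapted to the smaller orbits produced by $parrot$. The first step is to record the analog of Lemma~\ref{pair_lemma} for parity rotation. Since $n$ is even we have $parrot = rot^2$, and because $rot$ has order $n$ (Lemma~\ref{orb_struc}) it follows that $parrot^m(\pi) = rot^{2m}(\pi) = \pi$ exactly when $\tfrac{n}{2}\mid m$; hence every $parrot$-orbit has size exactly $\tfrac{n}{2}$. Next I would observe that $parrot$ permutes the even positions $\{2,4,\dots,n\}$ in a single cycle of length $\tfrac{n}{2}$, and likewise the odd positions $\{1,3,\dots,n-1\}$. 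Consequently, over one orbit each entry of $E_\pi$ occupies each even position exactly once, and each entry of $D_\pi$ occupies each odd position exactly once. This gives a clean description of the multiset of preimage--image pairs realized in $\mathcal{O}_\pi$: restricted to even positions it is exactly $\{2,4,\dots,n\}\times E_\pi$, and restricted to odd positions it is $\{1,3,\dots,n-1\}\times D_\pi$, each pair occurring once. Note this multiset genuinely depends on $\pi$ through $E_\pi$ and $D_\pi$, in contrast to the rotation case where Lemma~\ref{pair_lemma} forces $\zeta_\pi = [n]\times[n]$.

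With this description the sum of the statistic over the orbit becomes the number of pairs $(i,j)$ in this multiset with $j=i$ or $j = i+1 \bmod n$, and I would split the count by the parity of the position. For an even position $i$, the condition $j=i$ contributes one pair precisely when $i$ is an \emph{even} element of $E_\pi$, while the condition $j = i+1 \bmod n$ contributes one pair precisely when the \emph{odd} value $i+1 \bmod n$ lies in $E_\pi$; as $i$ runs over $\{2,4,\dots,n\}$ the values $i+1 \bmod n$ run bijectively over all odd elements of $[n]$. These two conditions cannot hold simultaneously, so summing the contributions counts the even elements of $E_\pi$ together with its odd elements, i.e.\ all of $E_\pi$, giving $\#E_\pi = \tfrac{n}{2}$. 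The symmetric computation on odd positions yields $\#D_\pi = \tfrac{n}{2}$. Hence the statistic totals $n$ over each orbit, and dividing by the orbit size $\tfrac{n}{2}$ gives the average $2$.

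The step I expect to carry the real content is the bookkeeping of the second paragraph: the observation that the two defining conditions of a cyclical small weak excedance split into a ``value equals position'' part and a ``value is one more than position'' part which, across a fixed parity class of positions, together sweep out the \emph{entire} set $E_\pi$ (respectively $D_\pi$) regardless of how the even and odd values are distributed between them. This insensitivity to $\pi$ is exactly what rescues homomesy even though the realized pair-multiset is no longer orbit-independent, and it explains why almost no other statistic survives the passage from $rot$ to $parrot$. The only remaining care is at the cyclic boundary $i=n$, where $i+1\bmod n = 1$, and $i=n-1$, where $i+1 = n$ needs no wraparound, which I would verify directly to confirm the bijection $i \mapsto i+1 \bmod n$ between even positions and odd values.
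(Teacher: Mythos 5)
Your proposal is correct and takes essentially the same route as the paper: your orbit description of $parrot$ (each entry of $E_\pi$ sweeping the even positions exactly once, each entry of $D_\pi$ the odd ones) is precisely the paper's Lemma \ref{mod_pair}, and your count of $\#E_\pi+\#D_\pi=n$ cyclical small weak excedances per orbit, divided by the orbit size $\tfrac{n}{2}$, is the paper's count of $n$ merely organized by position parity instead of by image value (the paper's four classes $A,B,C,D$ are exactly your even/odd elements of $E_\pi$ and $D_\pi$).
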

We begin with a lemma that corresponds to the Pair Lemma (Lemma \ref{pair_lemma}) for the standard rotation map.
\begin{lem} \label{mod_pair}
Assume $n$ is even. Let $e\in E_\pi$, and let $d\in D_\pi$. Then $e$ occupies every even position exactly once (and never an odd position), and $d$ occupies every odd position exactly once (and never an even position), in $\mathcal{O}_\pi$, the orbit of $\pi$ generated by $parrot$.
\end{lem}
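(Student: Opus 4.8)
The plan is to mirror the proof of the Pair Lemma (Lemma \ref{pair_lemma}), but restricted to a single parity class of positions. The crucial observation is that, for $n$ even, $parrot$ preserves the parity of positions: reading off its definition, the value occupying position $p$ in $\sigma$ occupies position $p-2$ in $parrot(\sigma)$ whenever $p\geq 3$, while the values in positions $1$ and $2$ move to positions $n-1$ and $n$ respectively. Since $n$ is even, $p-2$, $n-1$, and $n$ all share the parity of $p$, $1$, and $2$. Hence a value starting in an even position can only ever occupy even positions, and a value starting in an odd position can only ever occupy odd positions. This immediately yields the parenthetical claims that $e$ never occupies an odd position and $d$ never occupies an even position, and it recovers the fact, noted just after Definition \ref{parsets}, that $E_\sigma=E_\pi$ and $D_\sigma=D_\pi$ for every $\sigma\in\mathcal{O}_\pi$.

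First I would make the induced action on positions explicit. On the $n/2$ even positions $\{2,4,\dots,n\}$, the map sending the position of a value in $\sigma$ to its position in $parrot(\sigma)$ is the cyclic shift $p\mapsto p-2$ with wraparound $2\mapsto n$, which is a single cycle of length $n/2$. The identical statement holds on the $n/2$ odd positions $\{1,3,\dots,n-1\}$, where the shift has wraparound $1\mapsto n-1$. Thus, tracking a fixed value $e\in E_\pi$ starting at some even position $2k$, its sequence of positions $2k,\ 2k-2,\ \dots$ (taken cyclically) runs through all $n/2$ even positions before returning to $2k$.

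To finish, I would combine this cycle structure with the orbit size. Since $n$ is even, $\mathcal{O}_\pi$ has exactly $n/2$ elements (computed just before Definition \ref{parsets}), so $\pi, parrot(\pi),\dots,parrot^{n/2-1}(\pi)$ are the distinct permutations of the orbit. In each of these, $e$ sits in an even position, and by the previous paragraph these $n/2$ positions constitute one full traversal of the $n/2$-cycle on even positions; they are therefore pairwise distinct, so $e$ occupies each even position exactly once. The same argument applied to the odd-position cycle shows $d$ occupies each odd position exactly once, which completes the statement.

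The main obstacle, modest as it is, lies in handling the wraparound cleanly: one must verify that the two ``exceptional'' moves (positions $1$ and $2$ going to $n-1$ and $n$) are precisely what closes each parity class into a single $n/2$-cycle rather than splitting it, and that the length of this cycle coincides with the orbit size $n/2$, so that ``visits each position'' sharpens to ``visits each position exactly once.'' Once the shift-by-two description of the position action is in hand, the counting is immediate.
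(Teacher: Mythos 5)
Your proof is correct and takes essentially the same approach as the paper: the paper's explicit formula $parrot^{i-j}(\pi)(2j)=e$ is exactly your shift-by-two position cycle written as an exponent, and both arguments then upgrade ``visits every even (resp.\ odd) position'' to ``exactly once'' by comparing against the orbit size $\frac{n}{2}$. Your treatment of the wraparound and the parity-preservation observation just makes explicit what the paper leaves implicit.
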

\begin{proof}[Proof of Lemma \ref{mod_pair}]
    Let $e=\pi(2i)$ for $1\leq i\leq \frac{n}{2}$, and let $2j$ be any even position. Then, interpreting $parrot^{-m}(\pi)$ as the natural inverse of $parrot$ applied to $\pi$ $m$ times, we have $parrot^{i-j}(\pi)(2j)=e$, that is, $e$ occupies even position $2j$ in the permutation $parrot^{i-j}(\pi)\in\mathcal{O}_\pi$.

    Similarly, let $d=\pi(2i-1)$ for $1\leq i\leq \frac{n}{2}$, and let $2j-1$ be any odd position. Then $parrot^{i-j}(\pi)(2j-1)=d$.

    This accounts for the position of any entry $k\in[n]$ in all $\frac{n}{2}$ permutations in the orbit of $\pi$ generated by $parrot$. The claim follows. 
\end{proof}

We are now ready to prove the proposition. Recall the definitions of $E_\pi$ and $D_\pi$ from Definition \ref{parsets}.

\begin{proof}[Proof of Proposition \ref{parroteven}]
For arbitrary $i\in[n]$ and $\pi\in S_n$, four possibilities arise:
\begin{enumerate}
    \item[\textit{(a)}] we have $i\in E_\pi$ and $i$ even;
    \item[\textit{(b)}] we have $i\in E_\pi$ and $i$ odd;
    \item[\textit{(c)}] we have $i\in D_\pi$ and $i$ even; or
    \item[\textit{(d)}] we have $i\in D_\pi$ and $i$ odd.
\end{enumerate}
Let the set $A$ contain all entries $i$ for which \textit{(a)} holds, and define the sets $B,C,D$ similarly. Clearly $A,B,C,$ and $D$ are pairwise disjoint, and $\#A+\#B+\#C+\#D=\#(A\cup B\cup C\cup D)=n$. Now, by Lemma \ref{mod_pair}, we have:
\begin{itemize}
    \item if $i\in A$, then there is a fixed point $\sigma(i)=i$ for exactly one permutation $\sigma\in\mathcal{O}_\pi$ (since $i$ is even and occupies all even positions exactly once). Further there is no $\tau\in\mathcal{O}_\pi$ with $\tau(i-1)=i$, that is, no small excedance in $\mathcal{O}_\pi$ with image $i$ (since $i-1$ is odd and $i$ occupies only even positions);
    \item if $i\in B$, then there is no fixed point at $i$ in $\mathcal{O}_\pi$ (since $i$ is odd and occupies only even positions). Further there is exactly one $\tau\in\mathcal{O}_\pi$ with $\tau(i-1)=i$ (adopting the convention that $\tau(0)=\tau(n)$) that is, exactly one small excedance in $\mathcal{O}_\pi$ with image $i$ (since $i-1$ is even, and $i$ occupies all even positions exactly once). Note that when $1\in B$, there is a \textit{cyclical} small excedance with image $1$ at $n$.
\end{itemize}
Analogously, if $i\in D$, then $i$ is the image of exactly one fixed point, and of no small excedance; and if $i\in C$, then $i$ is the image of exactly one small excedance, and of no fixed point.

To be precise, we have shown that each $i\in A\cup B\cup C\cup D=[n]$, of which there are $n$, is the \textit{image} of exactly one cyclical small weak excedance in $\mathcal{O}_\pi$. These $n$ we have counted are all the cyclical small weak excedances in the orbit; dividing by the orbit size, $\frac{n}{2}$, we obtain the desired result.
\end{proof}

When $n$ is odd, the orbit structure generated by the parity rotation map is somewhat more complicated. Lemma \ref{mod_pair} no longer holds. We have instead the following:

\begin{lem} \label{n_odd}
    For each pair $(e,d)\in E_\pi\times D_\pi$, $e$ occurs directly preceding $d$ and directly following $d$ each in exactly $\lfloor \frac{n}{2} \rfloor$ permutations in the orbit of $\pi$ generated by $parrot$.
\end{lem}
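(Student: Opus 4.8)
The plan is to exploit the fact that, for odd $n$, the map $parrot$ acts independently on the entries occupying odd positions and those occupying even positions. Reading off the definition exactly as in the proof of Lemma \ref{mod_pair}, each application of $parrot$ sends the entry in a given odd position to the preceding odd position cyclically (with the entry in position $1$ wrapping around to position $n$), and sends the entry in a given even position to the preceding even position cyclically. Thus the entries of $D_\pi$ move through the $\lceil \frac{n}{2}\rceil$ odd positions in a single cycle of period $\lceil \frac{n}{2}\rceil$, while the entries of $E_\pi$ move through the $\lfloor \frac{n}{2}\rfloor$ even positions in a single cycle of period $\lfloor \frac{n}{2}\rfloor$. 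First I would record these two cyclic motions explicitly, keeping the two periods separate.

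The key structural observation is that $\gcd(\lceil \frac{n}{2}\rceil,\lfloor \frac{n}{2}\rfloor)=1$, since for odd $n$ these integers differ by $1$. Consequently, by the Chinese Remainder Theorem and the known orbit size $\lceil \frac{n}{2}\rceil\lfloor \frac{n}{2}\rfloor$, as $k$ ranges over the orbit the pair (position of $d$, position of $e$) in $parrot^k(\pi)$ takes each of its $\lceil \frac{n}{2}\rceil\lfloor \frac{n}{2}\rfloor$ possible values exactly once. Equivalently, the orbit is parametrized by $\mathbb{Z}/\lceil \frac{n}{2}\rceil \times \mathbb{Z}/\lfloor \frac{n}{2}\rfloor$, where the position of $d$ depends only on the first coordinate (visiting each odd position once) and the position of $e$ only on the second (visiting each even position once). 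The point of this is that the positions of $d$ and of $e$ can be prescribed independently, and each joint prescription is realized by exactly one permutation in $\mathcal{O}_\pi$.

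The count is then direct. Since every entry of $E_\pi$ always occupies an even position and every entry of $D_\pi$ always occupies an odd position, $e$ directly precedes $d$ in some $\sigma\in\mathcal{O}_\pi$ precisely when $e$ sits in an even position $2t$ and $d$ in the adjacent odd position $2t+1$; the admissible adjacent pairs are $(2,3),(4,5),\dots,(n-1,n)$, of which there are $\lfloor \frac{n}{2}\rfloor$. By the independence above, each such pair is realized by exactly one orbit element, and because $e$ and $d$ each appear once in a permutation no element is counted twice, so $e$ directly precedes $d$ in exactly $\lfloor \frac{n}{2}\rfloor$ permutations. The symmetric statement for ``$e$ directly following $d$'' uses the odd–even adjacent pairs $(1,2),(3,4),\dots,(n-2,n-1)$, again $\lfloor \frac{n}{2}\rfloor$ of them, yielding the same total.

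The main obstacle I anticipate is making the parametrization fully rigorous: verifying that each cyclic motion advances by exactly one step of its own cycle per application of $parrot$, so that after $k$ steps the two coordinates are genuinely independent residues modulo $\lceil \frac{n}{2}\rceil$ and $\lfloor \frac{n}{2}\rfloor$, and confirming that coprimality together with the orbit size forces every joint position to occur exactly once. Once this bookkeeping is settled, enumerating the adjacent position pairs is routine.
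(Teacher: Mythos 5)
Your proof is correct, and it takes a genuinely different route from the paper's. The paper's argument is a hands-on orbit walk: it normalizes the orbit so that $e$ sits in position $n-1$ and $d$ in position $n-2$, observes that both entries slide backward by two positions per application of $parrot$ so the adjacency persists for $\lfloor \frac{n}{2}\rfloor$ consecutive orbit elements in each of the two orders (separated by the step where $d$ wraps around the end), and then closes with a global counting argument --- summing the accounted-for neighbors of $d$ over all of $E_\pi$ and comparing with the orbit size $\lceil \frac{n}{2}\rceil\lfloor \frac{n}{2}\rfloor$ --- to rule out any further adjacencies. You instead exploit the fact that the odd-position and even-position entries undergo independent cyclic rotations of coprime periods $\lceil \frac{n}{2}\rceil$ and $\lfloor \frac{n}{2}\rfloor$, so by the Chinese Remainder Theorem every joint prescription (position of $d$, position of $e$) is realized by exactly one element of the orbit; the lemma then reduces to enumerating the $\lfloor \frac{n}{2}\rfloor$ adjacent position pairs of each type. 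Your route obtains ``exactly'' in one stroke rather than ``at least'' followed by a separate exhaustion step, and it actually proves something strictly stronger: a full $parrot$-analog of the Pair Lemma (Lemma \ref{pair_lemma}) for odd $n$, which would also streamline the later propositions (e.g., the descent/ascent count in Proposition \ref{asc_des}). What the paper's approach buys in exchange is elementarity --- it needs nothing beyond the explicit dynamics, with no appeal to coprimality or CRT. The bookkeeping you flagged as the main obstacle does check out directly from the definition: for odd $n$, each application of $parrot$ sends the entry in odd position $j\geq 3$ to position $j-2$ and the entry in position $1$ to position $n$, and likewise on even positions, so each coordinate advances exactly one step along a single cycle and the two residues are genuinely independent.
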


\begin{proof}
    Let $(e,d)=(\pi(i),\pi(j))\in E_\pi\times D_\pi$ (so $i$ is even and $j$ is odd). After $\lfloor \frac{n}{2} \rfloor$ applications of the map $parrot$ to $\pi$, $e$ is again in position $i$, and $d$ is in position $j+2$ if $j\neq n$, and position $1$ if $j=n$. Suppose that $j<i$. Then $d$ is in position $i-1$ for the first time after we apply $parrot$ $\lfloor \frac{n}{2} \rfloor\frac{i-j-1}{2}$ times. If $i<j$, this happens after we apply $parrot$ exactly $\lfloor \frac{n}{2} \rfloor\frac{n-j+i}{2}$ times.

    Now if we take $\frac{n-1-i}{2}$ steps `backwards' in the orbit (by applying the obvious inverse of $parrot$ this many times), we have that $e$ is in position $n-1$ and $d$ is in position $n-2$. Now and after each of the next $\frac{n-1}{2}-1=\lfloor \frac{n}{2} \rfloor-1$ applications of $parrot$, we have $e$ directly following $d$. Applying $parrot$ once more gives us a permutation with $e$ in position $n-1$ and $d$ in position $n$. Now and after each of the next $\frac{n-1}{2}-1=\lfloor \frac{n}{2} \rfloor-1$ applications of $parrot$, we have $e$ directly following $d$. Note that in this final permutation, $d$ is in the first position.

    Having seen that $e$ directly precedes (resp. follows) $d$ in at least $\lfloor \frac{n}{2} \rfloor$ permutations in each orbit, it remains only to show that they do not occur next to each other in any other permutation. To see this, recall that $parrot$ on $S_n$ generates orbits of size $\lfloor \frac{n}{2} \rfloor\lceil \frac{n}{2} \rceil$ and note that we have accounted for the entry directly preceding (resp. following) $d$ in $\lfloor \frac{n}{2} \rfloor\lfloor \frac{n}{2} \rfloor+\lfloor \frac{n}{2} \rfloor=\frac{n^2-1}{4}$ permutations in each orbit: each of $\lfloor \frac{n}{2} \rfloor$ elements of $E_\pi$ exactly $\lfloor \frac{n}{2} \rfloor$ times, and for each of these one additional permutation with $d$ in position $1$ (resp. $n$) with nothing preceding (resp. following). This accounts for every permutation in the orbit.
\end{proof}

Now we can easily show the following:

\begin{prop} \label{asc_des} (Statistics 21, 245)
    When $n$ is odd, the number of descents and the number of ascents are both $\frac{n-1}{2}$-mesic for $S_n$ with parity rotation.
\end{prop}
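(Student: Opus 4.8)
The plan is to reduce the count of descents over an orbit to the adjacency count supplied by Lemma \ref{n_odd}. The crucial structural observation is that, since $n$ is odd, consecutive positions $i$ and $i+1$ always have opposite parity; hence in any $\sigma\in\mathcal{O}_\pi$ the two entries $\sigma(i),\sigma(i+1)$ occupying an adjacent pair consist of exactly one element of $E_\pi$ and one element of $D_\pi$ (recall Definition \ref{parsets}, and that these two sets are invariant along the orbit). Thus every descent is an adjacency between some $e\in E_\pi$ and some $d\in D_\pi$, and it falls into one of two types: either $e$ directly precedes $d$ (when $i$ is even), in which case the adjacency is a descent exactly when $e>d$; or $e$ directly follows $d$ (when $i$ is odd), in which case it is a descent exactly when $d>e$.

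First I would fix a pair $(e,d)\in E_\pi\times D_\pi$ and total its descent contribution across the orbit. By Lemma \ref{n_odd}, $e$ directly precedes $d$ in exactly $\lfloor n/2\rfloor$ permutations of the orbit and directly follows $d$ in exactly $\lfloor n/2\rfloor$ others. The ``precedes'' occurrences contribute a descent precisely when $e>d$, and the ``follows'' occurrences precisely when $d>e$. Since $E_\pi$ and $D_\pi$ partition $[n]$ we have $e\ne d$, so exactly one of $e>d$, $d>e$ holds; hence the pair $(e,d)$ contributes exactly $\lfloor n/2\rfloor$ descents to the orbit, regardless of which inequality holds.

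Summing over all $\#E_\pi\cdot\#D_\pi=\lfloor n/2\rfloor\lceil n/2\rceil$ pairs gives a total of $\lfloor n/2\rfloor^2\lceil n/2\rceil$ descents in the orbit. Dividing by the orbit size $\lfloor n/2\rfloor\lceil n/2\rceil$ (established earlier for odd $n$) yields the orbit average $\lfloor n/2\rfloor=\tfrac{n-1}{2}$, as claimed. For ascents I would note that each of the $n-1$ adjacencies at positions $1,\dots,n-1$ is either an ascent or a descent, with no ties since all entries are distinct, so the per-permutation ascent and descent counts sum to $n-1$; averaging and invoking the descent result gives ascent average $(n-1)-\tfrac{n-1}{2}=\tfrac{n-1}{2}$ (alternatively, one reruns the same argument with the inequalities reversed).

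The step requiring the most care is the bookkeeping that links Lemma \ref{n_odd} to linear (non-cyclic) descents: I must verify that the ``directly preceding/following'' adjacencies counted by the lemma are exactly the $n-1$ linear adjacencies at positions $1,\dots,n-1$, with no spurious wraparound between positions $n$ and $1$ and no double counting. A consistency check confirms this: each permutation has $\tfrac{n-1}{2}$ adjacencies of each type, so summing $\lfloor n/2\rfloor$ per pair over all $\lfloor n/2\rfloor\lceil n/2\rceil$ pairs reproduces exactly $\tfrac{n-1}{2}\cdot\tfrac{n^2-1}{4}$ adjacencies of each type across the orbit, matching the per-permutation tally times the orbit size. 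Once this alignment is confirmed, the averaging is immediate.
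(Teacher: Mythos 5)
Your proof is correct and follows essentially the same route as the paper's: both arguments fix a pair $(e,d)\in E_\pi\times D_\pi$, use Lemma \ref{n_odd} to conclude that each pair contributes exactly $\lfloor n/2\rfloor$ descents (and $\lfloor n/2\rfloor$ ascents) across the orbit, sum over all $\lceil n/2\rceil\lfloor n/2\rfloor$ pairs, and divide by the orbit size. Your explicit justification that every adjacency joins an element of $E_\pi$ with one of $D_\pi$ (via the parity of consecutive positions), and your complementation argument for ascents, are minor refinements of the same counting scheme rather than a different approach.
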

\begin{proof}
    There are $\lceil \frac{n}{2}\rceil \lfloor \frac{n}{2} \rfloor$ pairs $(e,d)$ in $E_\pi\times D_\pi$. For each of these pairs, either $e>d$ or $d>e$. Lemma \ref{n_odd} states that these entries appear consecutively as $ed$ and as $de$ each $\lfloor \frac{n}{2} \rfloor$ times in $\mathcal{O}_\pi$. The $\lfloor \frac{n}{2} \rfloor$ permutations when the greater entry is first have descents there, and the $\lfloor \frac{n}{2} \rfloor$ when the smaller entry is first have ascents there. Again, there are $\lfloor \frac{n}{2} \rfloor$ of each, and this is true for each of $\lceil \frac{n}{2}\rceil \lfloor \frac{n}{2} \rfloor$ pairs $(e,d)$ in $E_\pi\times D_\pi$. Dividing by orbit size, we have that the average number of descents and ascents in each orbit is $\lfloor \frac{n}{2} \rfloor$, as desired.
\end{proof}
Note that the number of runs in a permutation is one more than the number of descents -- for a run is an increasing contiguous subsequence of a permutation in one-line notation, and these are separated by descents -- and is equal to the width of the tree associated to the permutation. Thus:

\begin{cor} (Statistics 325, 470)
    When $n$ is odd, the number of runs and the width of the tree associated to the permutation are $\frac{n+1}{2}$-mesic for $S_n$ with parity rotation.
\end{cor}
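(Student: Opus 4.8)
The plan is to reduce the corollary entirely to Proposition \ref{asc_des} via the two elementary identities noted just above: for every permutation the number of runs equals one more than the number of descents, and the width of the associated tree equals the number of runs. Once these identities are in hand, no new dynamical input is required, and the homomesy transfers through Lemma \ref{lin_com}.

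First I would pin down the runs–descents identity under our conventions. Writing a permutation in one-line notation, a run is a maximal increasing contiguous block, and consecutive runs are separated precisely by descents; adopting the convention that there is no descent at position $n$, a permutation with $d$ descents decomposes into exactly $d+1$ maximal runs. Hence the runs statistic is the sum of the descents statistic and the constant statistic equal to $1$. The constant statistic $1$ is trivially $1$-mesic, since its average over any orbit is $1$, and Proposition \ref{asc_des} gives that descents is $\frac{n-1}{2}$-mesic for $S_n$ with parity rotation when $n$ is odd. Applying Lemma \ref{lin_com} with coefficients $a=b=1$ then yields that the number of runs is homomesic with orbit average $\frac{n-1}{2}+1=\frac{n+1}{2}$.

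It remains to treat the width of the tree. Here I would invoke the stated equality between the width of the tree associated to a permutation (Stat 325) and its number of runs (Stat 470), so that tree width is literally the same statistic as the number of runs and therefore inherits the identical orbit average $\frac{n+1}{2}$.

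The main obstacle, such as it is, lies in justifying the tree-width identity: unlike the runs–descents relation, which is immediate from the definitions, the equality of tree width with the number of runs depends on the specific construction of the associated tree, and so demands either a short direct argument from that construction or a citation to the FindStat description. Everything downstream of these two identities is a routine application of Lemma \ref{lin_com} and introduces no content beyond Proposition \ref{asc_des}.
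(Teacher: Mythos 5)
Your proposal is correct and follows essentially the same route as the paper: the paper likewise deduces the corollary from Proposition \ref{asc_des} via the identities that runs equal descents plus one and that tree width equals the number of runs, with the shift by the constant statistic handled (implicitly in the paper, explicitly in your write-up) by Lemma \ref{lin_com}. Your added caution that the tree-width/runs identity rests on the FindStat construction is fair, but the paper also simply asserts it, so there is no substantive difference.
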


And further:

\begin{prop} (Statistic 1520)
    When $n$ is odd, the number of strict 3-descents, i.e. entries $i$ such that $\pi(i)>\pi(i+3)$, is $(\frac{n-3}{2})$-mesic for $S_n$ with parity rotation.
\end{prop}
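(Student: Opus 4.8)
The plan is to mirror the proof of Proposition \ref{asc_des} for ascents and descents, replacing ``consecutive'' by ``three apart.'' The first observation is that positions $i$ and $i+3$ always have opposite parity, so a strict $3$-descent always compares one entry from $E_\pi$ (sitting in an even position) against one entry from $D_\pi$ (sitting in an odd position); since $parrot$ preserves the sets $E_\pi$ and $D_\pi$ (Definition \ref{parsets}), this even/odd classification of the two entries involved is constant across the orbit. Thus, exactly as in the ascent/descent argument, it suffices to understand, for each pair $(e,d)\in E_\pi\times D_\pi$, how often $e$ and $d$ sit three positions apart throughout $\mathcal{O}_\pi$.

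To this end I would first upgrade Lemma \ref{n_odd} to its natural ``full'' form: when $n$ is odd, the even positions $\{2,4,\dots,n-1\}$ are permuted by $parrot$ in a single cycle of length $\frac{n-1}{2}$ and the odd positions $\{1,3,\dots,n\}$ in a single cycle of length $\frac{n+1}{2}$. These lengths are coprime (they differ by $1$), so by the Chinese Remainder Theorem, for each fixed pair $(e,d)\in E_\pi\times D_\pi$ the ordered pair (position of $e$, position of $d$) runs over every element of $\{\text{even positions}\}\times\{\text{odd positions}\}$ exactly once as $\sigma$ ranges over the orbit of size $\frac{n-1}{2}\cdot\frac{n+1}{2}=\frac{n^2-1}{4}$. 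Lemma \ref{n_odd} is precisely the special case that counts the adjacent combinations.

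With this in hand, I would count strict $3$-descents orbit-wide by summing over pairs $(e,d)$. There are two ways for $e$ and $d$ to be three apart: $e$ at an even position $i$ with $d$ at the odd position $i+3$, possible exactly for $i\in\{2,4,\dots,n-3\}$, giving $\frac{n-3}{2}$ combinations; and $d$ at an odd position $i$ with $e$ at the even position $i+3$, possible exactly for $i\in\{1,3,\dots,n-4\}$, again $\frac{n-3}{2}$ combinations. By the strengthened lemma each such combination occurs exactly once in the orbit. A strict $3$-descent at $i$ requires $\sigma(i)>\sigma(i+3)$, so if $e>d$ the descents come only from the first family (all $\frac{n-3}{2}$ of them) and if $e<d$ only from the second (again $\frac{n-3}{2}$); either way each pair $(e,d)$ contributes exactly $\frac{n-3}{2}$ strict $3$-descents across the orbit. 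Summing over all $\frac{n-1}{2}\cdot\frac{n+1}{2}=\frac{n^2-1}{4}$ pairs and dividing by the orbit size $\frac{n^2-1}{4}$ then yields the orbit average $\frac{n-3}{2}$.

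The main obstacle is twofold, and both parts are bookkeeping rather than conceptual. First, one must establish the coprimality/CRT strengthening of Lemma \ref{n_odd} cleanly, confirming that the orbit really has full size $\frac{n^2-1}{4}$ (this holds because a nontrivial cyclic shift of a block of distinct entries never fixes the permutation, so $parrot^k(\pi)=\pi$ forces $k$ to be divisible by both cycle lengths). Second, one must verify the parity-and-boundary count that both families of ``three-apart'' position pairs have exactly $\frac{n-3}{2}$ members; the off-by-one behaviour at the ends, namely which even or odd $i$ keep $i+3\le n$, must be checked carefully, using that $n-3$ is even and $n-4$ odd since $n$ is odd.
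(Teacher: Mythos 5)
Your proof is correct, but it takes a genuinely different route from the paper's. The paper keeps Lemma \ref{n_odd} as its only structural tool and instead builds a bijection between descents in $\mathcal{O}_\pi$ and cyclical 3-descents not at position $n$: an even descent $\sigma(i)>\sigma(i+1)$ is sent to a 3-descent of $parrot^{\lfloor n/2\rfloor}(\sigma)$ (which returns even-position entries to their places while shifting odd-position entries by two), and an odd descent to a 3-descent of $parrot^{-\lfloor n/2\rfloor}(\sigma)$; it then subtracts the cyclical 3-descents that are not standard ones, which correspond under this bijection to descents at positions $n-1$ and $n-2$ and, by Lemma \ref{n_odd}, number exactly one per permutation on average. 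Combined with Proposition \ref{asc_des} (descents are $\frac{n-1}{2}$-mesic), this yields $\frac{n-1}{2}-1=\frac{n-3}{2}$. You instead upgrade Lemma \ref{n_odd} to a full equidistribution statement: for each $(e,d)\in E_\pi\times D_\pi$, the joint position pair visits each element of (even positions) $\times$ (odd positions) exactly once per orbit, by the Chinese Remainder Theorem since the two position cycles have coprime lengths $\frac{n-1}{2}$ and $\frac{n+1}{2}$, and then you count strict 3-descents directly. Your CRT lemma is sound (the cycle structure of $parrot$ on even and odd positions is exactly as you describe, and the paper's orbit-size computation already gives full orbit length $\frac{n^2-1}{4}$), your boundary counts of $\frac{n-3}{2}$ configurations in each of the two families are right, and the parity observation that every strict 3-descent pairs an element of $E_\pi$ with one of $D_\pi$ makes the sum over pairs exhaustive. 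What your route buys: it is self-contained (no reliance on the descent homomesy, and no bijection-plus-overcount correction, which is the delicate step in the paper's argument), it subsumes Lemma \ref{n_odd} as the special case of adjacent positions, and it generalizes with no extra work to strict $k$-descents for any odd $k$, giving $\frac{n-k}{2}$-mesy. What the paper's route buys: it reuses results already established, so the incremental proof is shorter, and it exhibits an explicit orbit-level correspondence between descents and 3-descents.
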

\begin{proof}
    There is a bijection between the descents in $\mathcal{O}_\pi$ and the cyclical 3-descents not at $n$. An even descent $\pi(i)>\pi(i+1)$ is mapped to the descent $parrot^{\lfloor \frac{n}{2} \rfloor}(\pi)(i)>parrot^{\lfloor \frac{n}{2} \rfloor}(\pi)(i+3\mod n)$, which is a descent since $\pi(i)=parrot^{\lfloor \frac{n}{2} \rfloor}(\pi)(i)$ and $\pi(i+1)=parrot^{\lfloor \frac{n}{2} \rfloor}(\pi)(i+3\mod n)$. An odd descent $\sigma(j)>\sigma(j+1)$ is mapped to the descent $parrot^{-\lfloor \frac{n}{2} \rfloor}(\sigma)(j)>parrot^{-\lfloor \frac{n}{2} \rfloor}(\sigma)(j+3\mod n)$, which is a descent since $\sigma(j)=parrot^{-\lfloor \frac{n}{2} \rfloor}(\sigma)(j)$ and $\sigma(j+1)=parrot^{-\lfloor \frac{n}{2} \rfloor}(\sigma)(j+3\mod n)$. Note that $parrot^{-k}(\sigma)$ denotes the permutation given by applying the inverse of $parrot$ to $\sigma$ $k$ times. We can reverse this map easily using the inverse of $parrot$, and so it is a bijection.\\

    We are not, however, counting \textit{cyclical} 3-descents in $\mathcal{O}_\pi$. We have to subtract those cyclical 3-descents that are not standard 3-descents. They correspond under the bijection just described to descents at $n-1$ and $n-2$. By Lemma \ref{n_odd}, for each of $\lceil \frac{n}{2}\rceil \lfloor \frac{n}{2} \rfloor$ pairs $(e,d)\in E_\pi\times D_\pi$, there is one such descent, at $n-2$ if $d>e$ and at $n-1$ if $e>d$. Hence we overcounted above by $\lceil \frac{n}{2}\rceil \lfloor \frac{n}{2} \rfloor$, or, dividing by orbit size, by an average of one. Since descents are $\lfloor \frac{n}{2} \rfloor$-mesic for $S_n$ with $parrot$ when $n$ is odd, 3-descents are therefore $(\lfloor \frac{n}{2} \rfloor-1)$-mesic.
\end{proof}

\subsection{Valley hopping}
Valley hopping is a toggling action on permutations with a very nice visual representation.\footnote{See \cite{valley_hopping} for a more formal definition than the one given here; note that valley hopping is called the Modified Foata-Strehl action by some authors.} We draw a permutation as a `mountain range' by placing points on the Cartesian plane at $(i,\pi(i))$ for $i\in [n]$ and drawing line segments between $(i,\pi(i))$ and $(i+1,\pi(i+1))$ for $i\in[n-1]$. Recall that a peak in a permutation is an index $i$ for which $\pi(i-1)<\pi(i)>\pi(i+1)$, and a valley is an index $j$ for which $\pi(j-1)>\pi(j)<\pi(j+1)$. The drawing process represents peaks and valleys in the permutation as pictorial peaks and valleys. Note that a peak is always a descent and a valley always an ascent; we call a descent that is not a peak a `double descent', and an ascent that is not a valley a `double ascent'. We add infinitely tall peaks before the first entry and after the last one, such that $\pi(1)$ and $\pi(n)$ can be valleys but not peaks. Then we toggle elements in the permutation by leaving peaks and valleys unchanged, and `hopping' double descents and ascents across the valleys.  The `mountain range' for the permutation  $246135\in S_6$ is shown in Figure \ref{fig:valleyhopping} with possible toggles indicated by arrows.
\begin{figure}
    \centering
  \begin{tikzpicture}[scale=0.625]

        \node[draw, circle] (linf) at (-4,7) {$\infty$};
        \node[draw, circle] (p1) at (-3,2) {2};
        \node[draw, circle] (p2) at (-2,4) {4};
        \node[draw, circle] (p3) at (-1,6) {6};
        \node[draw, circle] (p4) at (0,1) {1};
        \node[draw, circle] (p5) at (1,3) {3};
        \node[draw, circle] (p6) at (2,5) {5};
        \node[draw, circle] (rinf) at (3,7) {$\infty$};

        \draw[draw] (linf) -- (p1) -- (p2) -- (p3) -- (p4) -- (p5) -- (p6) -- (rinf);
        \draw[->] (p2) -- (-3.25,4);
        \draw[->] (p5) -- (-.25,3);
        \draw[->] (p6) -- (.75,5);
    \end{tikzpicture}
    \caption{Valley Hopping on $\pi=246135$.}
    \label{fig:valleyhopping}
\end{figure}
Denoting by $\varphi_S(\pi)$ the permutation given by hopping the elements of $S\subseteq[n]$ in this way, we have that the orbit of $\pi$ is $\{\varphi_S(\pi)\mid S\subseteq[n]\}$. Since hopping a peak or a valley does nothing, this is equivalent to $\{\varphi_t(\pi)\mid t\subseteq T\}$ where $T$ is the set of togglable elements of $\pi$, that is, the double descents and double ascents. For instance $\pi=246135\in S_6$ has togglable entries $\pi(2)=4$, $\pi(5)=3$, and $\pi(6)=5$. We toggle over all 8 subsets of these 3 elements to get that the orbit of $\pi$ is $\{246135, 426135, 246315, 246513, 426315, 426513, 246531, 426531\}$.

\begin{thm}
    The following statistics are homomesic for permutations with valley hopping:
\end{thm}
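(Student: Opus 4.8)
The plan is to follow the template used for rotation and parity rotation: first pin down the orbit structure of valley hopping precisely enough to compute orbit averages, then decompose each target statistic into a part governed by the (orbit-invariant) peak--valley skeleton and a part controlled by the togglable entries, which averages to a predictable value.

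First I would record the structural analog of the Pair Lemma (Lemma \ref{pair_lemma}). Because the individual hops $\varphi_x$ are commuting involutions, the orbit of $\pi$ is a Boolean lattice $\{\varphi_S(\pi)\mid S\subseteq T\}$ of size $2^{|T|}$, where $T$ is the set of togglable entries. The two facts I need are (i) that the set of peak values and the set of valley values are constant throughout the orbit, so the numbers of peaks $P$ and valleys $V$ are orbit invariants, and (ii) that hopping a togglable value $x$ flips $x$ between a double ascent and a double descent while leaving the type (peak, valley, double ascent, double descent) of every other value unchanged. Granting (ii), for each togglable value $x$ and each fixed choice of the remaining coordinates of $S$, exactly one of the two settings of $x$ makes $x$ a double descent; summing over the Boolean lattice shows that each togglable value is a double descent in exactly half of the $2^{|T|}$ permutations of the orbit. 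Hence the orbit average of the number of double descents (equivalently of double ascents) is $|T|/2$.

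Next I would express the target statistics through this skeleton. For the number of descents I use the identity $\mathrm{des}(\sigma)=P+\mathrm{dd}(\sigma)$, where $\mathrm{dd}(\sigma)$ is the number of double descents: a descent edge $(i,i+1)$ is recorded at its top $i$, which (with the tall $\infty$ peaks padded at both ends) is forced to be a peak or a double descent, and conversely every peak and double descent lies in $[1,n-1]$ and contributes exactly one descent. Averaging over the orbit gives orbit average $P+|T|/2$. The number of ascents is $(n-1)-\mathrm{des}$ and the number of runs is $\mathrm{des}+1$, so both reduce to this computation. The arithmetic that upgrades orbit invariance to \emph{homomesy} is the cancellation of the orbit-dependent quantity $P$: with the convention of tall peaks at both ends the internal local extrema alternate and both begin and end with a valley, so $V=P+1$ on every orbit, and with $|T|=n-P-V$ this yields $P+|T|/2=P+\tfrac12(n-2P-1)=\tfrac{n-1}{2}$, independent of the orbit. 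This is exactly why descents are homomesic even though the number of peaks (which is constant on each orbit but varies from orbit to orbit) is not. Thus descents and ascents are $\tfrac{n-1}{2}$-mesic and runs are $\tfrac{n+1}{2}$-mesic; any further entry on the list would be handled by the same decomposition into an orbit-fixed peak--valley term and a togglable term averaging to $|T|/2$, invoking Lemma \ref{lin_com} whenever the statistic is a linear combination of these.

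I expect the main obstacle to be item (ii): verifying that a single hop alters only the type of the hopped value and preserves the types of all others, so that the orbit really is a clean hypercube in which each togglable value is a double descent exactly half the time. This is the genuine valley-hopping analog of the Pair Lemma, and it is the only place where the detailed mechanics of the action (and the standard facts from \cite{valley_hopping}) are required; once it and the skeleton identity $V=P+1$ are established, the homomesy statements follow from routine bookkeeping.
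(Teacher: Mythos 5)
Your proposal is correct and follows essentially the same route as the paper: the paper likewise decomposes descents into peak descents (present in every permutation of the orbit, with the peak count an orbit invariant) plus togglable entries that are descents in exactly half of the $2^{\#T_\pi}$ orbit elements, and then uses the same cancellation $\#T_\pi = n-2pk(\pi)-1$ (equivalently your $V=P+1$) to obtain the orbit-independent average $\frac{n-1}{2}$, deducing ascents, runs, and tree width as immediate corollaries. The only cosmetic difference is that you explicitly isolate the hypercube/type-preservation fact as a lemma to verify, whereas the paper asserts it directly from the structure of the action.
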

    \begin{itemize}
        \item Stat 21: \textit{the number of descents of a permutation.} (average: $\frac{n-1}{2}$)
        \item Stat 245: \textit{the number of ascents of a permutation} (average: $\frac{n-1}{2}$)
        \item Stat 325: \textit{the width of the tree associated to a permutation} (average: $\frac{n+1}{2}$)
        \item Stat 470: \textit{the number of runs in a permutation} (average: $\frac{n+1}{2}$)
    \end{itemize}
\begin{prop} (Statistic 21)
    The number of descents is $\frac{n-1}{2}$-mesic for permutations with valley hopping.
\end{prop}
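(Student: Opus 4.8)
The plan is to express the descent count as a sum of contributions that are easy to average over a valley-hopping orbit. First I would classify every value of $\pi$, using the $\infty$-capped mountain range, as a peak, a valley, a double ascent, or a double descent, so that every value falls into exactly one class and the togglable set $T$ consists precisely of the double ascents and double descents. The key bookkeeping identity is that the number of descents of $\pi$ equals the number of \emph{descent tops}, i.e. values $\pi(j)$ with $\pi(j)>\pi(j+1)$: each descent at position $j\in[n-1]$ corresponds to the distinct value $\pi(j)$, and with the convention that $\pi(n)$ sits below a virtual $\infty$, a value is a descent top exactly when it is a peak or a double descent. Hence $\mathrm{des}(\pi)=(\#\text{peaks})+(\#\text{double descents})$.

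Next I would record two invariants of the orbit. Since valley hopping never moves peaks or valleys, the numbers of peaks $P$ and valleys $V$ are constant on $\mathcal{O}_\pi$; moreover, because the mountain range begins and ends at $\infty$, its sequence of local extrema alternates between valleys and peaks, beginning and ending with a valley, so $V=P+1$. Writing $n=P+V+|T|$ (every value lies in exactly one class), these give $|T|=n-2P-1$.

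The heart of the argument is to average the number of double descents over the orbit. The structural feature of valley hopping I would invoke is that $\varphi_t$ flips each togglable value in $t$ between being a double ascent and a double descent, while preserving the class of every other value; consequently the orbit is the Boolean lattice $\{\varphi_t(\pi)\mid t\subseteq T\}$ of size $2^{|T|}$, and each individual togglable value is a double descent in exactly half of its elements. Therefore the orbit average of $\#\text{double descents}$ is $|T|/2$. Combining with the identity above, the orbit average of $\mathrm{des}$ is $P+|T|/2=P+(n-2P-1)/2=(n-1)/2$, independent of the orbit, which is exactly the claimed homomesy. (As a sanity check, since $\mathrm{des}(\sigma)+\mathrm{asc}(\sigma)=n-1$, this also immediately yields the companion statement for ascents.)

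The main obstacle is the structural input in the third step: one must justify carefully that hopping a togglable element toggles only its own double-ascent/double-descent type and leaves the types (and hence the peak and valley sets) of all other values intact, so that the togglable coordinates vary independently over the orbit. This is the defining dynamical property of the Modified Foata--Strehl action; I would either cite it from \cite{valley_hopping} or verify it directly from the mountain-range description, after which the remaining computations are the elementary counting above.
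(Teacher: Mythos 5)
Your proposal is correct and follows essentially the same argument as the paper: both decompose descents as peaks plus double descents, use that valleys outnumber peaks by one to get $\#T = n-2\,pk(\pi)-1$, and average over the Boolean-lattice orbit of size $2^{\#T}$ in which each togglable element is a double descent exactly half the time, yielding $pk(\pi)+\#T/2 = \frac{n-1}{2}$. The only cosmetic difference is your framing via descent tops and your explicit flagging of the independence of toggles, which the paper uses implicitly.
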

\begin{proof}
    This result is a corollary (not drawn out by the authors) of \cite[Section 2]{valley_hopping}. We proceed by a much simpler and more informative counting argument. Denote the set of togglable elements in $\mathcal{O}_\pi$ by $T_{\pi}$--these are the double descents and the double ascents--and the number of peaks in $\pi$ as $pk(\pi)$. The number of valleys in $\pi$ is one greater than the number of peaks. Hence $\#T_{\pi}=n-2pk(\pi)-1$, and further $\#\mathcal{O}_{\pi}=2^{\#T_{\pi}}=2^{n-2pk(\pi)-1}$, since any subset of the elements in $T_{\pi}$ generates one permutation in the orbit of $\pi$.

    A descent in $\mathcal{O}_{\pi}$ occurs either at a peak or at some $t\in T_{\pi}$ (and never at a valley). A peak is a descent in all permutations $\sigma\in \mathcal{O}_{\pi}$, so that there are $pk(\pi)\#\mathcal{O}_{\pi}$ descents at peaks in $\mathcal{O}_{\pi}$. Further, each element $t\in T_{\pi}$ is a descent in exactly half of the permutations $\sigma\in \mathcal{O}_{\pi}$, so that there are $\#T_{\pi}\frac{\#\mathcal{O}_{\pi}}{2}$ double descents in $\mathcal{O}_{\pi}$ (double descents are not to be counted twice, despite the name; they are simply descents that are not also peaks). There are no other descents in $\mathcal{O}_{\pi}$, which therefore has a total of $pk(\pi)\#\mathcal{O}_{\pi}+\#T_{\pi}\frac{\#\mathcal{O}_{\pi}}{2}$ descents. We simplify as follows:
    \[
    pk(\pi)\#\mathcal{O}_{\pi}+\#T_{\pi}\frac{\#\mathcal{O}_{\pi}}{2}=\frac{2pk(\pi)\#\mathcal{O}_{\pi}}{2}+\frac{(n-2pk(\pi)-1)\#\mathcal{O}_{\pi}}{2}=\frac{(n-1)\#\mathcal{O}_{\pi}}{2}.
    \]
    Dividing by the size of $\pi$'s orbit allows us to retrieve the orbit-average:
    \[
    \frac{(n-1)\#\mathcal{O}_{\pi}}{2\#\mathcal{O}_{\pi}}=\frac{n-1}{2}.
    \]
    This does not depend on $\pi$, and so holds for all orbits generated by valley hopping on $S_n$, as desired.
\end{proof}
\begin{cor} (Statistics 245, 325, 470)
    The number of ascents is $\frac{n-1}{2}$-mesic, and both the number of runs of a permutation and the width of the tree associated to a permutation are $\frac{n+1}{2}$-mesic, for permutations with valley hopping.
\end{cor}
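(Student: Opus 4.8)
The plan is to derive all three homomesies from the descent result just established, using Lemma \ref{lin_com} together with elementary identities relating each of these statistics to the number of descents.

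First I would handle the number of ascents. Adopting the stated convention that a permutation in $S_n$ has neither an ascent nor a descent at position $n$, there are exactly $n-1$ relevant positions $i\in[n-1]$, and at each one precisely one of $\pi(i)<\pi(i+1)$ or $\pi(i)>\pi(i+1)$ holds, since the entries of a permutation are distinct. Hence for every $\pi$ the number of ascents plus the number of descents equals $n-1$; equivalently, the number of ascents equals $(n-1)$ minus the number of descents. The constant statistic $n-1$ is trivially $(n-1)$-mesic for any action, and the number of descents is $\frac{n-1}{2}$-mesic for valley hopping by the preceding proposition, so Lemma \ref{lin_com} gives that the number of ascents is homomesic for valley hopping with orbit-average $(n-1)-\frac{n-1}{2}=\frac{n-1}{2}$.

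Next I would treat the number of runs. A run is a maximal increasing contiguous subsequence, and consecutive runs are separated exactly by descents, so the number of runs of $\pi$ is one more than its number of descents. Writing the number of runs as the number of descents plus the constant statistic $1$ (which is $1$-mesic), Lemma \ref{lin_com} yields that the number of runs is homomesic for valley hopping with orbit-average $\frac{n-1}{2}+1=\frac{n+1}{2}$. Finally, since the width of the tree associated to a permutation equals its number of runs, that statistic is the same function on $S_n$ and is therefore likewise $\frac{n+1}{2}$-mesic.

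There is no genuine obstacle here: once the three identities (ascents $=(n-1)-$descents, runs $=$ descents $+1$, and width of tree $=$ runs) are in hand, the corollary is a direct application of the linear-combination lemma to the descent result. The only point requiring care is the convention at position $n$, which is what makes the ascent--descent complementarity produce the constant $n-1$ rather than $n$; getting this convention right is precisely what pins the orbit-averages at $\frac{n-1}{2}$ and $\frac{n+1}{2}$ rather than at shifted values.
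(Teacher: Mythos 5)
Your proposal is correct and follows essentially the same route as the paper: both express ascents as $(n-1)$ minus the number of descents, and runs and tree width as descents plus one, then transfer the $\frac{n-1}{2}$-mesy of descents to the other statistics. The only cosmetic difference is that you explicitly invoke Lemma \ref{lin_com} where the paper simply computes the orbit-averages directly from these identities; the underlying argument is identical.
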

\begin{proof}
    If $des(\pi)$ is the number of descents in $\pi\in S_n$, then $\pi$ has $n-1-des(\pi)$ ascents. Hence the average number of ascents for a permutation in the orbit of $\pi$ is $n-1-\frac{n-1}{2}=\frac{n-1}{2}$. The other two results follow immediately from the fact that both of these statistics (runs and tree width) on a permutation $\pi$ are exactly one more than the number of descents of $\pi$.
\end{proof}
There is one more statistic in the FindStat database for which no counterexample to homomesy with valley hopping on permutations was computed: Statistic 461, the rix statistic (on which see \cite{valley_hopping}). Lafreni\`ere and Zhuang \cite{rix} show that the rix statistic is homomesic for permutations with valley hopping.

\bibliographystyle{plain}
\bibliography{hom_perm_tog}

\end{document}